\newcommand{\R}{\bbbr}
\newcommand{\N}{\bbbn}
\renewcommand{\O}{\mathcal{O}}
\renewcommand{\l}{$\ell_{1}$}
\renewcommand{\Pr}{\mathcal{P}}
\newcommand{\X}{X}
\renewcommand{\d}{d}
\newcommand{\st}{{\rm s.\,t.}}
\newcommand{\norm}[1]{\lVert {#1} \rVert}
\newcommand{\abs}[1]{\lvert {#1} \rvert}
\newcommand{\suchthat}{\; | \;}
\newcommand{\define}{\coloneqq}
\newcommand{\E}{\mathds{E}}
\DeclareMathOperator{\interior}{int}
\DeclareMathOperator{\dom}{dom}
\DeclareMathOperator{\argmin}{argmin}
\begin{document}

\title{An Infeasible-Point Subgradient Method\\Using Adaptive Approximate
  Projections\thanks{This work has been funded by the Deutsche
    Forschungsgemeinschaft (DFG) within the project ``Sparse Exact and
    Approximate Recovery'' under grants LO 1436/3-1 and PF
    709/1-1. Moreover, D.~Lorenz acknowledges support from the DFG
    project ``Sparsity and Compressed Sensing in Inverse Problems''
    under grant LO 1436/2-1.}}  
\author{\sc Dirk A. Lorenz\inst{1} \and Marc E. Pfetsch\inst{2} \and Andreas M. Tillmann\inst{2}} 
\institute{Institute for Analysis and Algebra, TU Braunschweig, Germany \and Research Group Optimization, TU Darmstadt, Germany}
\maketitle

\begin{abstract}
  We propose a new subgradient method for the minimization of
  nonsmooth convex functions over a convex set.
  To speed up computations we use adaptive approximate
  projections only requiring to move within a certain distance of the exact
  projections (which decreases in the course of the algorithm). In
  particular, the iterates in our method can be infeasible throughout
  the whole procedure. Nevertheless, we provide conditions which
  ensure convergence to an optimal feasible point under suitable
  assumptions. One convergence result deals with step size sequences
  that are fixed a priori. Two other results handle dynamic
  Polyak-type step sizes depending on a lower or upper estimate of the
  optimal objective function value, respectively. Additionally, we
  briefly sketch two applications: Optimization with convex chance
  constraints, and finding the minimum \l-norm solution to an
  underdetermined linear system, an important problem in Compressed
  Sensing.
\end{abstract}
\begin{center}{\today}\end{center}

\section{Introduction}\label{sect:intro}
The projected subgradient method \cite{S85} is a classical algorithm
for the minimization of a nonsmooth convex function $f$ over a convex
closed constraint set $\X$, i.e., for the problem
\begin{equation}\label{eq:NLO}
\min\, f(x)\quad\st\quad x \in \X.  
\end{equation}
One iteration consists of taking a step of size $\alpha_k$ along the
negative direction of an arbitrary subgradient $h^k$ of the objective
function $f$ at the current point $x^k$ and then computing the next
iterate by projection ($\Pr_\X$) onto the feasible set~$\X$:
\[
x^{k+1} = \Pr_\X(x^k - \alpha_k\, h^k).
\]

Over the past decades, numerous extensions and specializations of this
scheme have been developed and proven to converge to a minimum (or
minimizer). Well-known disadvantages of the subgradient method are its
slow local convergence and the necessity to extensively tune
algorithmic parameters in order to obtain practical convergence. On
the positive side, subgradient methods involve fast iterations and are
easy to implement. In fact, they have been widely used in applications
and (still) form one of the most popular algorithms for nonsmooth
convex minimization.

The main effort in each iteration of the projected subgradient
algorithm usually lies in the computation of the
projection~$\Pr_\X$. Since the projection is the solution of a
(smooth) convex program itself, the required time depends on the
structure of~$\X$ and corresponding specialized algorithms. Examples
admitting a fast projection include the case where~$\X$ is the
nonnegative orthant or the \l-norm-ball $\{\,x \suchthat \norm{x}_1
\leq \tau\,\}$, onto which any $x \in \R^n$ can be projected in
$\O(n)$ time, see \cite{vdBSFM08}. The projection is more involved
if~$\X$ is, for instance, an affine space or a (convex) polyhedron. In
these latter cases, it makes sense to replace the exact
projection~$\Pr_\X$ by an approximation~$\Pr^\varepsilon_\X$. That is,
we do not approximate the projection operator uniformly, but, for a
given $x$, we approximate the projected point adaptively up to a
desired accuracy. This is formalized by computing
points~$\Pr_\X^{\varepsilon}(x)$ with the property that
$\norm{\Pr_\X^{\varepsilon}(x) - \Pr_\X(x)} \leq \varepsilon$ for
every $\varepsilon \geq 0$. This concept of an absolute accuracy of
the projected point is similar in spirit to the adaptive evaluation of
operators as, e.g., used in adaptive wavelet methods (cf. the
$\mathbf{APPLY}$-routine in~\cite{CDD02}). Algorithmically, the idea
is that during the early phases of the algorithm we do not need a
highly accurate projection, and $\Pr_\X^{\varepsilon}(x)$ can be
faster to compute if $\varepsilon$ is larger. In the later phases, one
then adaptively tightens the requirement on the accuracy.

One particularly attractive situation in which the approach works is
the case where~$\X$ is an affine space, i.e., defined by a linear
equation system. Then one can use a truncated iterative method, e.g.,
a conjugate gradient (CG) approach, to obtain an adaptive approximate
projection. We have observed that often only a few steps (2 or 3) of
the CG-procedure are needed to obtain a practically convergent method.

In this paper, we focus on the investigation of convergence properties
of a general variant of the projected subgradient method which relies
on such adaptive approximate projections. We study conditions on the
step sizes and on the accuracy requirements~$\varepsilon_k$ (in each
iteration~$k$) in order to achieve convergence of the sequence of
iterates to an optimal point, or at least convergence of the function
values to the optimum. We investigate two variants of the
algorithm. In the first one, the sequence $(\alpha_k)$ of step sizes
forms a divergent but square-summable series ($\sum \alpha_k =
\infty$, $\sum\alpha_k^2 < \infty$) and is given a priori. The second
variant uses dynamic step sizes which depend on the difference of the
current function value to a constant \emph{target value} that
estimates the optimal value.

A crucial difference of the resulting algorithms to the standard
method is the fact that iterates can be infeasible, i.e., are not
necessarily contained in~$\X$. We thus call the algorithm of this
paper \emph{infeasible-point subgradient algorithm} (ISA). As a
consequence, the objective function values of the iterates might be
smaller than the optimum, which requires a non-standard analysis; see
the proofs in Section~\ref{sect:proofs} for details. Moreover, we
always assume that~$X$ is strictly contained in the interior of the
domain~$\dom f$ of~$f$. Note that this excludes the case $X = \dom f$,
where our algorithm cannot be applied. Furthermore, we assume that
every iterate lies in~$\dom f$, since otherwise no first-order
information is available. This is automatically fulfilled if~$\dom f$
is the whole space, or it can be ensured by requiring that the
accuracies~$\varepsilon_k$ are small enough; cf. also Part~4 of
Remark~\ref{rem:ISA_conv_over}.

This paper is organized as follows. We first discuss related
approaches in the literature. Then we fix some notation and recall a
few basics. In the main part of this paper (Sections \ref{sect:ISA}
and \ref{sect:proofs}), we state our infeasible-point subgradient
algorithm (ISA) and provide proofs of convergence. In the subsequent
sections we briefly discuss some variants of ISA, an example for the
adaptive approximate projection operator from the context of convex
chance constraints, and an application of ISA to the problem of
finding finding the minimum \l-norm solution of an underdetermined
linear equation system, a problem that lately received a lot of
attention in the context of compressed sensing (see,
e.g.,~\cite{D06,CRT06,CSweb}). We finish with some concluding remarks
and give pointers to possible extensions as well as topics of future
research.

\subsection{Related work}

The objective function values of the iterates in subgradient
algorithms typically do not decrease monotonically. With the right
choice of step sizes, the (projected) subgradient method nevertheless
guarantees convergence of the objective function values to the
minimum, see, e.g., \cite{S85,P67,BS81,P78}. A typical result of this
sort holds for step size sequences $(\alpha_k)$ which are nonsummable
($\sum_{k=0}^{\infty}\alpha_k = \infty$), but square-summable
($\sum_{k=0}^{\infty}\alpha_k^2 < \infty$). Thus, $\alpha_k \to 0$ as
$k \to \infty$. Often, the correspon\-ding sequence of points can also
be guaranteed to converge to an optimal solution $x^*$, although this
is not necessarily the case; see~\cite{AW09} for a discussion.

Another widely used step size rule uses an estimate $\varphi$ of the
optimal value~$f^*$, a subgradient $h^k$ of the objective function $f$
at the current iterate $x^k$, and relaxation parameters $\lambda_k >
0$:
\begin{equation}\label{eq:alphak}
  \alpha_k = \lambda_k \frac{f(x^k) - \varphi}{\norm{h^k}_2^2}.
\end{equation}
The parameters $\lambda_k$ are constant or required to obey certain
conditions needed for convergence proofs. The dynamic rule
\eqref{eq:alphak} is a straightforward genera\-lization of the
so-called Polyak-type step size rule, which uses $\varphi = f^*$, to
the more practical case when $f^*$ is unknown. The convergence results
given in~\cite{AHKS87} extend the work of Polyak~\cite{P67,P69} to
$\varphi\geq f^*$ and $\varphi < f^*$ by imposing certain conditions
on the sequence $(\lambda_k)$. We will generalize these results
further, using an adaptive approximate projection operator instead of
the (exact) Euclidean projection.

Many extensions of the basic subgradient scheme exist,
such as variable target value methods (see, e.g.,
\cite{CL02,KAC91,LS05,NB01,SCT00,GK99,BS81}), using approximate
subgradients~\cite{BM73,AIS98,LPS96a,DAF09}, or incremental projection
schemes~\cite{NDP09,NB01,K04}, to name just a few.

Inexact projections have been used previously, probably most
prominently for convex feasibility problems in the framework of
successive projection methods. Indeed, the optimization
problem~(\ref{eq:NLO}) can, at least theoretically, be cast as the
convex feasibility problem to determine $x^* \in X\cap\{f(x)\leq
f^*\}$. Using so-called subgradient projections~\cite{BB96} onto the
second set leads to a subgradient step
\[
x^{k+1} \define x^k - \frac{f(x^k) - f^*}{\|h^k\|^2}h^k,
\]
which corresponds to using a Polyak-type step size without relaxation
parameter, employing the exact optimal value. As illustrated
in~\cite{BB96}, this approach leads to a very flexible framework for
convex feasibility problems as well as (non-smooth) convex
optimization
problems. 

Moreover,~\cite{Z10} considers additive vanishing non-summable error
terms (for both the projection and the subgradient step) and
establishes the existence of a (decaying) bound on the error terms
such that the algorithm will reach a small neighborhood of the optimal
set. However, these bounds are not given explicitly. In contrast, our
results (Theorems~\ref{thm:ISA_conv_apriori} and
\ref{thm:ISA_conv_under}) contain explicit conditions for the error
terms that guarantee convergence to the optimum.
Another example for the use of inexact projections is the level set
subgradient algorithm in \cite{K98}, although there, all iterates are
strictly feasible.

We emphasize that there are at least three conceptually different
approaches to approximate projections in the present context. The
first concept---prominent, e.g., in the field on convex feasibility
problems---uses the idea of approximating the \emph{direction towards
  the feasible set}, i.e., the iterates approximately move towards the
constraint set. In the second, related, concept one \emph{projects
  exactly onto supersets} of the constraint which are easier to
handle, e.g., half-spaces. With both ideas one can use powerful
notions like Fej{\'e}r-monotonicity or the concept of firmly
non-expansive mappings, see, e.g.,~\cite{BB96} and the more
recent~\cite{LLM09}; see also the ``feasibility operator'' framework
proposed in~\cite{NDP09}. To employ either approach one exploits
analytical knowledge about the feasible set, e.g., that it can be
written as a level set of a known and easy-to-handle convex
function. In the third approach, one aims at \emph{approximating the
  projected point} without further restricting the direction. This
concept applies, for instance, in situations in which a computational
error is made in the projection step (e.g., as in~\cite{Z10}) or when
it is impossible or undesirable to handle the constraints
analytically, but a numerical algorithm is available which calculates
the projection point up to a given accuracy. The adaptive approximate
projections considered in this paper fall under this third category.

Note that, besides the different philosophies and fields of
application, none of the approaches directly dominates the other: On
the one hand, one may move directly towards the feasible set while
missing the projection point, and on the other hand, one may also move
closer to the projected point along a direction which is not towards
the feasible set; see Figure~\ref{fig:approximation_concepts} for an
illustration. However, one can sometimes, for a given rule which
approximates the projection direction, find appropriate half-spaces
which contain the feasible set and realize this approximate projection
exactly. In Section~\ref{sec:examples} we give a concrete example in
which the Fej{\'e}r-type feasibility operator of~\cite{NDP09} is not
applicable, but the exact projection point can be approximated
reasonably well in the sense of our adaptive approximate projection
(see above or~\eqref{eq:IPr} below).

In the present paper we only consider the third approach to
approximate projections and do not use any assumption like
non-expansiveness or Fej{\'e}r-monotonicity for the iteration mapping
in our convergence analyses.

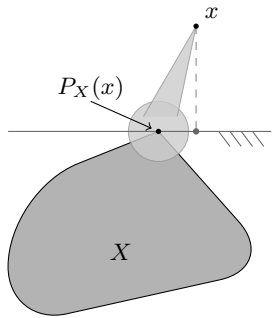
\begin{figure}[tb]
  \centering
  \begin{tikzpicture}
    \draw[fill=black!30] (0,0) [rounded corners=10mm] -- (0,1.8)
    [sharp corners] -- (2,2.6) [rounded corners=8mm] -- (3.6,0.8) -- cycle;
    
    \draw (1.5,1) node{$X$};
    
    \draw[black!40,fill=black!20,opacity=0.8] (2,2.6) circle (4mm);

    \fill[black!20,opacity=0.8] (2.5,4) -- (1.8,2.8) -- (2.25,2.8) -- cycle;
    \draw[thin,black!40,opacity=0.8] (2.5,4) -- (1.8,2.8);
    \draw[thin,black!40,opacity=0.8] (2.5,4) -- (2.25,2.8);
    
    \draw[black!60] (0,2.6) -- (3.5,2.6);
    \foreach \x in {0,0.15,0.3,0.45}
      \draw[black!60] (2.95+\x,2.4) -- (2.8+\x,2.6);
    
    \draw[black!60,opacity=0.8,dashed] (2.5,4) -- (2.5,2.6);
    \filldraw[black!60] (2.5,2.6) circle (1pt);

    \fill (2.5,4) circle (1pt) node[above right]{$x$};
    \fill (2,2.6) circle (1pt);
    \draw (1.1,2.9) node[above]{$P_X(x)$};
    \draw[thin,shorten >= 1mm,->] (1.1,3.0) -- (2,2.6);
  \end{tikzpicture}
  \caption{Schematic illustration of the three concepts of
    ``approximate projections'': The approximation of the projection
    direction (or ``moving towards the feasible set'') moves from $x$
    along a direction within the shaded cone. The exact projection
    onto a half-space containing $X$ moves along the dashed line. The
    approximation of the projected point moves from $x$ into a
    neighborhood of $P_X(x)$, the shaded circle.}
  \label{fig:approximation_concepts}
\end{figure}


\subsection{Notation}

In this paper, we consider the convex optimization
problem~\eqref{eq:NLO} in which we assume that $f: \R^n \to
\R\cup\{\infty\}$ is a convex function (not necessarily
differentiable), $\dom f = \{x\in\R^n\suchthat f(x)<\infty\}$, and $\X
\subset\interior(\dom f)\subseteq \R^n$ is a closed convex set (note
that this implies that $f$ is continuous on $\X$).  The set
\begin{equation}\label{eq:sgdef}
  \partial f(x) \define  \{ h \in \R^n \suchthat f(y) \geq f(x)+h^{\top}(y-x)
  \quad \forall\, y \in \R^n\,\}
\end{equation}
is the \emph{subdifferential} of $f$ at a point $x \in \R^n$; its members
are the corresponding \emph{subgradients}. Throughout this paper, we
will assume~\eqref{eq:NLO} to have a nonempty set of optima
\begin{equation}\label{eq:Xstar}
  \X^* \define  \argmin\{f(x) \suchthat x \in \X\}.
\end{equation}
An optimal point will be denoted by $x^*$ and its objective function value
$f(x^*)$ by~$f^*$. For a sequence $(x^k) =(x^0, x^1, x^2, \dots)$ of
points, the corresponding sequence of objective function values will be
abbreviated by $(f_k) = (f(x^k))$.

By $\norm{\cdot}_p$ we denote the usual $\ell_p$-norm, i.e., for $x \in
\R^n$,
\begin{equation}
  \norm{x}_p \define  
  \begin{cases}
    \big(\sum_{i=1}^{n} \abs{x_i}^p\big)^{\frac{1}{p}}, & \text{if }1 \leq p < \infty,\\
    \displaystyle\max_{i=1, \dots, n}\, \abs{x_i}, & \text{if }p = \infty.
  \end{cases}
\end{equation}
If no confusion can arise, we shall simply write $\norm{\cdot}$ instead of
$\norm{\cdot}_2$ for the Euclidean ($\ell_2$-)norm. The Euclidean distance
of a point $x$ to a set $Y$ is
\begin{equation}\label{eq:dist}
  \d_Y(x)\define \inf_{y\in Y} \norm{x-y}_2.
\end{equation}
For $Y$ closed and convex, \eqref{eq:dist} has a unique minimizer, namely
the orthogonal (Euclidean) projection of $x$ onto $Y$, denoted by
$\Pr_Y(x)$.

All further notation will be introduced where it is needed.

\section{The Infeasible-Point Subgradient Algorithm (ISA)}
\label{sect:ISA}

In the projected subgradient algorithm, we replace the exact
projection~$\Pr_\X$ by an adaptive approximate projection. We require
that we can adapt the accuracy of the approximation of the projected point absolutely,
i.e., that for any given accuracy parameter $\varepsilon \geq 0$, the
adaptive approximate projection $\Pr_\X^{\varepsilon} : \R^n \to \R^n$ satisfies
\begin{equation}\label{eq:IPr}
  \norm{\Pr_\X^{\varepsilon}(x) - \Pr_\X(x)} \leq \varepsilon\qquad\text{for all }x\in\R^n.
\end{equation}
In particular, for $\varepsilon = 0$, we have $\Pr_\X^0 =
\Pr_\X$. Note that $\Pr_\X^\varepsilon(x)$ does not necessarily
produce a point that is \emph{closer} to $\Pr_\X(x)$ (or even to $\X$)
than $x$ itself. In fact, this is only guaranteed for $\varepsilon <
\d_X(x)$. 

One example arises in the context of convex chance constraints and is
discussed in Section~\ref{sec:ChanceConstraints}. For the special case
in which~$\X$ is an affine space, we give a detailed discussion of an
adaptive approximate projection satisfying the above requirement in
Section~\ref{sec:CompressedSensing}.

By replacing the exact by an adaptive projection in the projected
subgradient method, we obtain the \emph{Infeasible-point Subgradient
  Algorithm} (ISA), which we will discuss in two variants in the
following.

The stopping criteria of the algorithms will be ignored for the
convergence analyses. In practical implementations, one would stop,
e.g., if no significant progress in the objective (or feasibility) has
occurred within a certain number of iterations.

\subsection{ISA with a predetermined step size sequence}

\begin{algorithm}[t]
  \caption{\textsc{Predetermined Step Size ISA}}
  \label{alg:APrioriISA}
  \begin{algorithmic}[1]
    \REQUIRE{a starting point $x^0$, sequences $(\alpha_k)$, $(\varepsilon_k)$}
    \ENSURE{an (approximate) solution to~\eqref{eq:NLO}}
    \STATE initialize $k \define  0$
    \REPEAT
    \STATE choose a subgradient $h^k \in \partial f(x^k)$ of $f$ at $x^k$
    \STATE compute the next iterate $x^{k+1} \define  \Pr_\X^{\varepsilon_k}\left(x^k - \alpha_k h^k\right)$
    \STATE increment $k \define  k+1$
    \UNTIL{a stopping criterion is satisfied}
  \end{algorithmic}
\end{algorithm}

If the step sizes $(\alpha_k)$ and projection
accuracies~$(\varepsilon_k)$ are \emph{predetermined} (i.e., given a
priori), we obtain Algorithm~\ref{alg:APrioriISA}. Note that $h^k = 0$
might occur, but does not necessarily imply that~$x^k$ is optimal,
because $x^k$ may be infeasible. In such a case, the adaptive projection will
change $x^k$ to a different point as soon as $\varepsilon_k$ becomes
small enough.

We will now state our main convergence result for this variant of the
ISA, using fairly standard step size conditions. The proof is provided
in Section~\ref{sect:proofs}.

\begin{theorem}[Convergence for predetermined step size sequences]\label{thm:ISA_conv_apriori}\\
  Let the projection accuracy sequence $(\varepsilon_k)$ be such that
  \begin{equation}\label{eq:eps}
    \varepsilon_k \geq 0,\quad \sum_{k=0}^\infty \varepsilon_k < \infty,
  \end{equation}
  let the positive step size sequence $(\alpha_k)$ be such that
  \begin{equation}\label{eq:alpha}
    \sum_{k=0}^\infty \alpha_k = \infty,\quad\sum_{k=0}^\infty \alpha_k^2 <
    \infty,
  \end{equation}
  and let the following relation hold:
  \begin{equation}\label{eq:alphageqepsrest}
    \alpha_k \geq \sum_{j=k}^\infty \varepsilon_j\qquad\forall\, k=0,1,2,\dots
  \end{equation}
  Suppose $\norm{h^k} \leq H < \infty$ for all $k$. Then the sequence
  of the ISA iterates $(x^k)$ converges to an optimal point.
\end{theorem}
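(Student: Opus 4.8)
The plan is to adapt the classical convergence proof for the projected subgradient method with square-summable step sizes, tracking carefully the extra error terms introduced by the inexact projection. Fix an optimal point $x^* \in \X^*$. The key quantity to control is $\norm{x^{k+1} - x^*}^2$. Writing $y^k \define x^k - \alpha_k h^k$ for the unprojected step, we have $\Pr_\X(y^k) \in \X$, so by the usual non-expansiveness of the exact projection, $\norm{\Pr_\X(y^k) - x^*} \leq \norm{y^k - x^*}$. Then using \eqref{eq:IPr},
\[
\norm{x^{k+1} - x^*} = \norm{\Pr_\X^{\varepsilon_k}(y^k) - x^*} \leq \norm{\Pr_\X(y^k) - x^*} + \varepsilon_k \leq \norm{y^k - x^*} + \varepsilon_k.
\]
Squaring and expanding $\norm{y^k - x^*}^2 = \norm{x^k - x^*}^2 - 2\alpha_k (h^k)^\top(x^k - x^*) + \alpha_k^2 \norm{h^k}^2$, and using the subgradient inequality $(h^k)^\top(x^k - x^*) \geq f(x^k) - f^*$ together with $\norm{h^k} \leq H$, I would arrive at a recursion of the form
\[
\norm{x^{k+1} - x^*}^2 \leq \norm{x^k - x^*}^2 - 2\alpha_k\bigl(f(x^k) - f^*\bigr) + \alpha_k^2 H^2 + 2\varepsilon_k\bigl(\norm{y^k - x^*}\bigr) + \varepsilon_k^2.
\]
The cross term $2\varepsilon_k \norm{y^k - x^*}$ is the new feature and the place where condition \eqref{eq:alphageqepsrest} must be used; the subtlety (and the reason the analysis is ``non-standard'', as the authors warn) is that because iterates are infeasible, $f(x^k) - f^*$ may be negative, so we cannot simply drop that term as nonnegative.

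The second step is to handle the possibly-negative $f(x^k) - f^*$ terms. I would split into two cases at each iteration: either $f(x^k) \geq f^*$, in which case $-2\alpha_k(f(x^k)-f^*) \leq 0$ behaves as usual; or $f(x^k) < f^*$, in which case $-2\alpha_k(f(x^k) - f^*) = 2\alpha_k(f^* - f(x^k)) > 0$ and must be bounded. To bound it, I would need to show that an infeasible point at which $f$ drops below $f^*$ cannot be too far below: since $x^k$ is the inexact projection of the previous step, its distance to $\X$ is controlled — roughly $\d_\X(x^k) \leq \varepsilon_{k-1}$ — and combining with the Lipschitz continuity of $f$ (which follows from $f$ being convex and finite on the open set $T$, with local Lipschitz constant tied to the subgradient bound $H$), one gets $f^* - f(x^k) \leq H \cdot \d_\X(x^k) \leq H\varepsilon_{k-1}$ (modulo care near the boundary of $T$). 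Hence the ``bad'' contribution over all iterations is bounded by a constant times $H \sum_k \alpha_k \varepsilon_{k-1}$, which is finite since $(\alpha_k)$ is bounded (being square-summable) and $\sum \varepsilon_k < \infty$. Likewise, the cross term $2\varepsilon_k\norm{y^k - x^*}$: I would show $\norm{x^k - x^*}$ stays bounded, say by some $R$, so $\norm{y^k - x^*} \leq R + \alpha_k H$ is bounded, and $\sum_k \varepsilon_k(R + \alpha_k H) < \infty$. Boundedness of the iterates itself follows from the recursion by a standard argument: the increments $\norm{x^{k+1}-x^*}^2 - \norm{x^k - x^*}^2$ are bounded above by a summable sequence plus a nonpositive term (in the $f(x^k)\geq f^*$ case), so $\norm{x^k - x^*}^2$ is bounded.

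With these pieces in place, summing the recursion over $k = 0, \dots, N$ gives
\[
0 \leq \norm{x^{N+1} - x^*}^2 \leq \norm{x^0 - x^*}^2 - 2\sum_{k=0}^N \alpha_k\bigl(f(x^k) - f^*\bigr) + C,
\]
where $C < \infty$ collects all the summable error contributions ($\sum \alpha_k^2 H^2$, $\sum \varepsilon_k^2$, the cross terms, and the ``bad'' terms). Rearranging and letting $N \to \infty$ shows $\sum_k \alpha_k (f(x^k) - f^*)$ is bounded above; combined with the lower bound on the same sum coming from the bad-case estimate, and with $\sum \alpha_k = \infty$, a standard argument forces $\liminf_k f(x^k) = f^*$. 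Then one upgrades $\liminf$ to full convergence of the iterates: picking a subsequence $x^{k_i} \to \bar{x}$ with $f(x^{k_i}) \to f^*$, the distance bound $\d_\X(x^{k_i}) \to 0$ forces $\bar{x} \in \X$, and continuity gives $f(\bar x) = f^*$, so $\bar x \in \X^*$. Finally, re-running the recursion with $x^* = \bar x$ and using that $\norm{x^k - \bar x}^2$ is (up to summable perturbations) monotonically almost-decreasing — a quasi-Fej\'er type argument — shows the whole sequence converges to $\bar x$.

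The main obstacle I anticipate is the careful bookkeeping around the case $f(x^k) < f^*$: making the estimate $f^* - f(x^k) = \O(\varepsilon_{k-1})$ rigorous requires knowing $\d_\X(x^k) \leq \varepsilon_{k-1}$ and a uniform Lipschitz bound for $f$ near $\X$, which in turn needs the subgradients to be bounded not just along the iterates but on a neighborhood of $\X$ — or an argument that the iterates stay in a fixed compact subset of the open set $T$ on which $f$ is automatically Lipschitz. Ensuring all error terms telescope/sum correctly under precisely the hypotheses \eqref{eq:eps}, \eqref{eq:alpha}, \eqref{eq:alphageqepsrest} — in particular seeing exactly where the tail condition $\alpha_k \geq \sum_{j\geq k}\varepsilon_j$ is essential rather than merely $\sum \varepsilon_k < \infty$ — is the delicate part of the argument.
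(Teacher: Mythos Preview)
Your outline is essentially correct and, interestingly, follows a route that is in several respects cleaner than the paper's. The paper does \emph{not} first establish boundedness of $(\|x^k-x^*\|)$; instead it unrolls the term $\sum_k\varepsilon_k\|x^k-x^*\|$ recursively back to $\|x^0-x^*\|$, producing a double sum of the form $\sum_j\alpha_j\sum_{k\geq j}\varepsilon_k$ which is then bounded precisely via the tail condition~\eqref{eq:alphageqepsrest}. Your approach---bound $\|x^k-x^*\|$ once and for all, then estimate the cross term by $R\sum_k\varepsilon_k$---sidesteps this entirely, and if carried through it suggests that~\eqref{eq:alphageqepsrest} is an artifact of the paper's technique rather than an essential hypothesis (you only use $\sum\varepsilon_k<\infty$ and boundedness of $(\alpha_k)$). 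Similarly, the paper proves the full convergence $f_k\to f^*$ before touching the iterates, via an elaborate contradiction argument with index subsequences $(m_\ell),(n_\ell)$ trapping a hypothetical second accumulation point; your quasi-Fej\'er route needs only $\liminf f_k=f^*$ to extract a subsequential limit $\bar x\in\X^*$, after which the recursion with $x^*=\bar x$ (all error terms now summable) forces $\|x^k-\bar x\|^2$ to converge, and the subsequence pins the limit at zero.

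One step does need more care than ``standard argument'': boundedness of $(\|x^k-x^*\|)$ is \emph{not} immediate from ``increment $\leq$ summable $+$ nonpositive'', because the increment contains the term $2\varepsilon_k\|x^k-x^*\|$, making the estimate self-referential. Writing $a_k=\|x^k-x^*\|^2$ one has $a_{k+1}\leq a_k+2\varepsilon_k\sqrt{a_k}+s_k$ with $\sum\varepsilon_k=:E<\infty$ and $\sum s_k=:S<\infty$; a discrete Gr\"onwall argument (set $M_N=\max_{k\leq N}\sqrt{a_k}$, sum to get $M_N^2\leq a_0+2EM_N+S$, solve the quadratic) gives a uniform bound, but this should be made explicit rather than waved through. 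You are right to flag the Lipschitz issue in bounding $f^*-f_k\leq H\varepsilon_{k-1}$: the paper's own proof at that step also effectively invokes a subgradient bound at $\Pr_\X(y^{k-1})$ rather than at the iterate $x^{k-1}$, so this delicacy is shared by both arguments and not a gap specific to yours.
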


\begin{remark}
  Relations~\eqref{eq:eps},~\eqref{eq:alpha}, and
  \eqref{eq:alphageqepsrest} can be ensured, e.g., by the sequences
  $\varepsilon_k = 1/k^2$ and $\alpha_k = 1/(k-1)$ for $k>1$; in
  particular,
  \[
  \sum_{j=k}^\infty \varepsilon_k \leq \int_{k-1}^\infty
  \frac{1}{x^2}\,dx = \frac{1}{k-1} = \alpha_k.
  \]
\end{remark}

\subsection{ISA with dynamic step sizes}

In order to apply the dynamic step size rule~\eqref{eq:alphak}, we
need several modifications of the basic method, yielding
Algorithm~\ref{alg:DynamicISA}. This algorithm works with an
estimate~$\varphi$ of the optimal objective function value $f^*$ and
essentially tries to reach a feasible point~$x^k$ with $f(x^k)
\leq\varphi$. (Note that if $\varphi = f^*$, we would have obtained an
optimal point in this case.) 

\begin{algorithm}[t]
  \caption{\textsc{Dynamic Step Size ISA}}
  \label{alg:DynamicISA}
  \begin{algorithmic}[1]
    \REQUIRE{estimate $\varphi$ of $f^*$, starting point $x^0$, sequences $(\lambda_k)$,
      $(\varepsilon_k)$}, parameter $\gamma\in(0,1)$ 
    \ENSURE{an (approximate) solution to \eqref{eq:NLO}}
    \STATE initialize $k \define 0$, $\ell=-1$, $x^{-1}\define x^0$, $h^{-1}\define 0$, $\alpha_{-1}\define 0$, $\varepsilon_{-1}\define \varepsilon_0$
    \REPEAT 
    \STATE choose a subgradient $h^k \in \partial f(x^k)$ of $f$ at $x^k$\label{step:subgradient}
    \IF{$f_k\leq\varphi\textbf{ or }h^k = 0$}\label{step:SubgradZero}
    \IF{$x^k \in \X$} \label{step:feasible}
    \STATE stop (at feasible point $x^k$ showing $\varphi\geq f^*$; optimal if $h^k=0$)\label{step:terminate_opt}
    \ENDIF
    \STATE increment $\ell\define\ell+1$, reset $x^{k} \define \Pr_\X^{\varepsilon}(x^{k-1} - \alpha_{k-1} h^{k-1})$ for $\varepsilon = \gamma^\ell\varepsilon_{k-1}$\label{step:refine_proj}
    \STATE go to Step \ref{step:subgradient}
    \ENDIF    
    \STATE compute step size $\alpha_k \define \lambda_k(f_k - \varphi)/\norm{h^k}^2$
    \STATE \hspace*{-0.5pt}compute the next iterate $x^{k+1} \define \Pr_\X^{\varepsilon_{k}}(x^k - \alpha_k h^k)$\label{step:NextIterate}
    \STATE \hspace*{-0.5pt}reset $\ell\define 0$ and increment $k \define  k+1$
    \UNTIL{a stopping criterion is satisfied}
  \end{algorithmic}
\end{algorithm}

\begin{remark} A few comments on Algorithm~\ref{alg:DynamicISA} are in order: 
\begin{enumerate}
\item Since $0<\gamma<1$, $\gamma^\ell \to 0$ (strictly monotonically)
  for $\ell\to\infty$. Thus,
  Steps~\ref{step:subgradient}--\ref{step:refine_proj} constitute a
  \emph{projection accuracy refinement phase}, i.e., an inner loop in
  which the current $k$ is temporarily fixed, and $x^k$ is
  \emph{recomputed} with a stricter accuracy setting for the adaptive
  projection. This phase either leads to a point showing $\varphi\geq
  f^*$ (by termination or convergence in the inner loop over $\ell$)
  or eventually resets $x^k$ to a point with $f_k>\varphi$ and
  $h^k\neq 0$ so that the regular (outer) iteration is resumed (with
  $k$ no longer fixed).
\item Note that, if $x^0$ is such that $f_0\leq \varphi$ or $h^k=0$,
  the algorithm begins with such a refinement phase, projecting $x^0$
  more and more accurately until neither case holds any longer (if
  possible); the initializations with counter $-1$ are needed for this
  eventuality.
  Moreover, we could clearly postpone the (repeated)
  determination of a subgradient (Step~\ref{step:subgradient}) in a
  refinement phase until $f_k>\varphi$ is achieved, i.e., $h^k=0$
  would be the only reason for another accuracy refinement. This may
  be important in practice, where finding a subgradient sometimes is
  expensive itself, and the case $h^k=0$ presumably occurs very rarely
  anyway. For the sake of brevity we did not treat this explicitly in
  Algorithm~\ref{alg:DynamicISA}.
\item There are various ways in which the accuracy refinement phase
  could be realized. Instead of $(\gamma^\ell)$ with constant
  $\gamma\in(0,1)$, any (strictly) monotonically decreasing sequence
  $(\gamma_\ell)$ could be used. Since we will need $\varepsilon_k\to
  0$ to achieve feasibility (in the limit) anyway, which implies that
  for all $k$ there always exists some $L>0$ such that
  $\varepsilon_{k+L}<\varepsilon_k$, we could also use
  $\min\{\varepsilon_{k-1},\varepsilon_{k-1+\ell}\}$ as the
  recalibrated accuracy. Moreover, we do not need to fix~$k$, i.e.,
  repeatedly \emph{replace} $x^k$ by finer approximate projections,
  but could produce a finite series of identical iterates (each reset
  to the last one before the inner loop started) until the refinement
  phase is over. 
  Similarly, we could use
  $\alpha_k=\max\{0,\lambda_k(f_k-\varphi)/\norm{h^k}^2\}$ (and $0$ if
  $h^k=0$); letting $\varepsilon_k\to 0$ then naturally implements the
  refinement, while in iterations with $\alpha_k=0$, the produced
  point may move up to $\varepsilon_k$ \emph{away} from the optimal
  set. Assuming $(\varepsilon_k)$ is summable, this does not impede
  convergence. For all these variants, analogues to the following
  convergence results hold true as well; however, the proofs require
  some extensions to account for the technical differences to the
  variant we chose to present, which admitted the overall shortest
  proofs. In practice, we would generally expect these variants to
  behave similarly. Furthermore, note that in principle, the
  ``problematic'' cases could also be treated by reverting to exact
  projections; however, in our present context this should be avoided
  since computing the exact projection is considered too expensive.

\end{enumerate}
\end{remark}  
We obtain the following convergence results, depending on
whether~$\varphi$ over- or underestimates~$f^*$. The proofs are
deferred to the next section.

\begin{theorem}[Convergence for dynamic step sizes with overestimation]\label{thm:ISA_conv_over}
  Let the optimal point set~$X^*$ be bounded, $\varphi \geq f^*$, $0 <
  \lambda_k \leq \beta < 2$ for all $k$, and $\sum_{k=0}^{\infty}
  \lambda_k = \infty$. Let $(\nu_k)$ be a nonnegative sequence with
  $\sum_{k=0}^{\infty} \nu_k < \infty$, and let
  \begin{align}
    \nonumber \overline{\varepsilon}_k \define  &
    - \left(\frac{\lambda_k(f_k - \varphi)}{\norm{h^k}} + \d_{X^*}(x^k)\right)\\
    & + \sqrt{\left(\frac{\lambda_k(f_k - \varphi)}{\norm{h^k}} + 
        \d_{X^*}(x^k)\right)^2 + \frac{\lambda_k(2 - \lambda_k)(f_k - \varphi)^2}{\norm{h^k}^2}}.\label{eq:epsISAover}
  \end{align}
  If the subgradients~$h^k$ satisfy $0 < \underline{H} \leq \norm{h^k} \leq
  \overline{H} < \infty$ and $(\varepsilon_k)$ satisfies $0 \leq \varepsilon_k
  \leq \min\{\overline{\varepsilon}_k,\,\nu_k\}$ for all $k$, then the
  following holds.
  \begin{enumerate}
  \item[\emph{~(i)}] For any given $\delta > 0$ there exists some
    index $K$ such that $f(x^K) \leq \varphi + \delta$.
  \item[\emph{(ii)}] If additionally $f(x^k) > \varphi$ for all $k$
    and if $\lambda_k \to 0$, then $f_k\to\varphi$ for $k\to\infty$.
  \end{enumerate}
\end{theorem}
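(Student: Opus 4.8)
The plan is to mimic the classical Polyak-type analysis for dynamic step sizes, but to track the squared distance to the optimal set $X^*$ (rather than to a single optimizer) and to carefully absorb the projection error $\varepsilon_k$ into the recursion. Fix an optimal point $x^* \in X^*$ and consider $\norm{x^{k+1} - x^*}$. Whenever $h^k \neq 0$ and no exact reprojection is triggered, we have $x^{k+1} = \Pr_\X^{\varepsilon_k}(x^k - \alpha_k h^k)$ with $\alpha_k = \lambda_k(f_k - \varphi)/\norm{h^k}^2$. Using the nonexpansiveness of $\Pr_\X$ together with $x^* \in \X$, and the bound \eqref{eq:IPr}, I would estimate
\begin{equation*}
\norm{x^{k+1} - x^*} \leq \norm{\Pr_\X(x^k - \alpha_k h^k) - x^*} + \varepsilon_k \leq \norm{x^k - \alpha_k h^k - x^*} + \varepsilon_k.
\end{equation*}
Squaring and expanding $\norm{x^k - \alpha_k h^k - x^*}^2 = \norm{x^k - x^*}^2 - 2\alpha_k (h^k)^\top(x^k - x^*) + \alpha_k^2\norm{h^k}^2$, and then using the subgradient inequality $(h^k)^\top(x^k - x^*) \geq f_k - f^* \geq f_k - \varphi$ (here $\varphi \geq f^*$ is used, and also $f_k > \varphi$ or $f_k \geq \varphi$ so the relevant quantities are nonnegative), gives
\begin{equation*}
\norm{x^k - \alpha_k h^k - x^*}^2 \leq \norm{x^k - x^*}^2 - \lambda_k(2-\lambda_k)\frac{(f_k-\varphi)^2}{\norm{h^k}^2}.
\end{equation*}
The point of the specific formula \eqref{eq:epsISAover} for $\overline{\varepsilon}_k$ is precisely that, when $\varepsilon_k \leq \overline{\varepsilon}_k$ and one replaces $\d_{X^*}(x^k)$ by the (larger or equal) quantity $\norm{x^k - x^*}$ chosen minimally, the cross term $2\varepsilon_k\norm{x^k - \alpha_k h^k - x^*} + \varepsilon_k^2$ introduced by squaring is dominated by the negative term above; so $\overline{\varepsilon}_k$ is the root of the quadratic (in $\varepsilon$) that makes the net change in $\norm{x^k-x^*}^2$ nonpositive. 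I would verify this algebraically by taking $x^*$ to be the projection of $x^k$ onto $X^*$, so that $\norm{x^k-x^*} = \d_{X^*}(x^k)$, and checking that $\varepsilon \leq \overline{\varepsilon}_k$ is equivalent to $\varepsilon^2 + 2\varepsilon(\alpha_k\norm{h^k} + \d_{X^*}(x^k)) \leq \lambda_k(2-\lambda_k)(f_k-\varphi)^2/\norm{h^k}^2$, which is immediate from the definition of $\overline{\varepsilon}_k$ as the positive root. This yields the Fej\'er-type inequality $\d_{X^*}(x^{k+1}) \leq \d_{X^*}(x^k)$ in the "generic" iterations; in the iterations where an exact reprojection (Step~\ref{step:ExactProjection}) or the $h^k = 0$ branch is invoked, either the algorithm terminates (Step~\ref{step:terminate} or Step~\ref{step:terminate_opt}) or $\varepsilon_k = 0$ and the standard Polyak argument applies verbatim, so the inequality still holds.

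For part (i), I argue by contradiction: suppose $f(x^k) > \varphi + \delta$ for all $k$. Then in every iteration $h^k \neq 0$ (as $f_k > \varphi \geq f^*$ rules out the unconstrained-optimum branch being a stopping state), and $f_k - \varphi > \delta$, so the negative term in the recursion is at least $\lambda_k(2-\lambda_k)\delta^2/H_u^2 \geq \lambda_k(2-\beta)\delta^2/H_u^2$ — but I also need to account for the fact that the full negative term $-\lambda_k(2-\lambda_k)(f_k-\varphi)^2/\norm{h^k}^2$ is only partially consumed by the $\varepsilon_k$ cross term. Here I would use the complementary fact that $\varepsilon_k \leq \nu_k$ with $\sum \nu_k < \infty$, so summing the exact recursion $\d_{X^*}(x^{k+1})^2 \leq \d_{X^*}(x^k)^2 - \lambda_k(2-\lambda_k)(f_k-\varphi)^2/\norm{h^k}^2 + (\text{terms bounded by } c\,\nu_k)$ over $k$ telescopes: the left side stays nonnegative, the $\nu_k$ terms sum to a finite constant (using boundedness of $X^*$ and hence of $\d_{X^*}(x^k)$, which follows since $\d_{X^*}(x^k)$ is nonincreasing up to summable perturbations), and therefore $\sum_k \lambda_k(2-\lambda_k)(f_k-\varphi)^2/\norm{h^k}^2 < \infty$. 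Combined with $\sum \lambda_k = \infty$, $\lambda_k \leq \beta < 2$, and $\norm{h^k} \leq H_u$, this forces $\liminf_k (f_k - \varphi) = 0$, contradicting $f_k - \varphi > \delta$ for all $k$. Hence some $K$ with $f(x^K) \leq \varphi + \delta$ exists.

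For part (ii), under the additional hypotheses $f_k > \varphi$ for all $k$ (so the algorithm never terminates and the exact-reprojection branch is never entered, since that branch requires $f(x^{k+1}) \leq \varphi$) and $\lambda_k \to 0$: the summability $\sum_k \lambda_k (f_k-\varphi)^2/\norm{h^k}^2 < \infty$ established above, together with $\sum \lambda_k = \infty$, gives $\liminf_k (f_k - \varphi) = 0$. To upgrade this to $\lim_k (f_k - \varphi) = 0$, I would argue that consecutive function values cannot jump: since $\alpha_k = \lambda_k(f_k-\varphi)/\norm{h^k}^2 \leq \lambda_k (f_k-\varphi)/H_\ell^2$, and $\d_{X^*}(x^k)$ is bounded (so $f_k - \varphi \leq H_u \cdot \text{diam-type bound}$ via the subgradient inequality again, giving a uniform bound $f_k - \varphi \leq M$), the step $\norm{x^{k+1} - x^k} \leq \alpha_k\norm{h^k} + \varepsilon_k \leq \lambda_k M / H_\ell + \nu_k \to 0$; by Lipschitz continuity of $f$ on the relevant bounded region (with modulus $H_u$, from the subgradient bound), $\abs{f_{k+1} - f_k} \to 0$. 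A standard argument then shows that $\liminf = 0$ plus vanishing increments forces $\lim (f_k - \varphi) = 0$, i.e., $f_k \to \varphi$.

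The main obstacle I anticipate is the bookkeeping in the first step: verifying that the particular square-root expression \eqref{eq:epsISAover} is exactly the threshold that makes the perturbed Fej\'er inequality go through, and handling cleanly the interplay between using $\d_{X^*}(x^k)$ (which depends on $x^k$, not on a fixed $x^*$) in that formula versus using a fixed optimizer in the recursion — one must consistently choose $x^* = \Pr_{X^*}(x^k)$ at each step, which is legitimate since the recursion only needs to hold for \emph{some} optimal point, but then one must re-anchor at each iteration and check that $\d_{X^*}(x^{k+1}) \leq \norm{x^{k+1} - \Pr_{X^*}(x^k)}$, which is automatic. A secondary subtlety is ensuring $\d_{X^*}(x^k)$ stays bounded so that the summable perturbations are genuinely summable with a finite total; this uses the boundedness of $X^*$ hypothesis and an induction showing $\d_{X^*}(x^{k+1}) \leq \d_{X^*}(x^k) + (\text{correction}) \leq \d_{X^*}(x^0) + \sum_j \nu_j$-type bound.
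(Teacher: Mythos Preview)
Your setup and part~(i) follow the paper's argument essentially verbatim: derive the perturbed Fej\'er inequality for $\d_{X^*}(x^k)$ (the paper packages this as Lemmas~\ref{lem:1}--\ref{lem:3} and Corollary~\ref{cor:1}), verify that $\overline{\varepsilon}_k$ is exactly the positive root making the net change nonpositive, and telescope under the counter-assumption $f_k>\varphi+\delta$ to contradict $\sum\lambda_k=\infty$. Your handling of the re-anchoring $x^*=\Pr_{X^*}(x^k)$ and of boundedness via the bounded-$X^*$ hypothesis is also what the paper does.

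The gap is in part~(ii). The assertion ``$\liminf=0$ plus vanishing increments forces $\lim=0$'' is \emph{false} for general nonnegative sequences: take $a_k$ that climbs from $0$ to $1$ and back in steps of size $1/j$, with $j$ increasing between excursions; then $\abs{a_{k+1}-a_k}\to 0$ and $\liminf a_k=0$ but $\limsup a_k=1$. So what you call a ``standard argument'' does not exist at that level of generality. What actually closes the argument is the extra information you have already derived but do not use in that final step: the increments are bounded not just by something tending to zero, but by $C\lambda_k+H_u\nu_k$, \emph{and} $\sum_k\lambda_k(f_k-\varphi)^2<\infty$. Combining these requires the block/crossing argument the paper spells out: if $\limsup(f_k-\varphi)\geq\eta>0$, pick indices $m_\ell,n_\ell$ marking crossings from above $\varphi+\tfrac{2}{3}\eta$ to below $\varphi+\tfrac{1}{3}\eta$; the increment bound forces $\sum_{j=m_\ell}^{n_\ell-1}\lambda_j$ to be bounded below (since the $\nu_j$-part is summable), hence the union of these blocks carries a divergent $\lambda$-sum, which contradicts $\sum_k\lambda_k(f_k-\varphi)^2<\infty$ because $f_j-\varphi>\tfrac{1}{3}\eta$ on those blocks. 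Once you spell this out, your proof of~(ii) coincides with the paper's.
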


\begin{remark}\label{rem:ISA_conv_over}\
  \begin{enumerate}
  \item The sequence $(\nu_k)$ is a technicality needed in the proof
    to ensure $\varepsilon_k \to 0$. Note from~\eqref{eq:epsISAover}
    that $\overline{\varepsilon}_k > 0$ as long as ISA keeps
    iterating (in the main loop over $k$), since $f_k > \varphi$ is 
    then guaranteed by the adaptive accuracy refinements and $0 < \lambda_k
    < 2$ holds by assumption.
  \item More precisely, part (i) of Theorem~\ref{thm:ISA_conv_over} essentially means
    that after a finite number of iterations, we reach a point~$x^k$
    with $f^*-c \leq f(x^k) \leq \varphi + \delta$ for any $c>0$. If $\varphi < f(x^k) \leq \varphi
    + \delta$, this point may still be infeasible, but the closer 
    $f(x^k)$ gets to~$\varphi$, the smaller
    $\overline{\varepsilon}_k$ becomes, i.e., the algorithm automatically 
    increases the projection accuracy.
    On the other hand, termination in Step~\ref{step:terminate_opt}
    implies that $f(x^k)\geq f^*$ (since $x^k$ is then feasible), and
    if some inner loop is infinite, then the refined projection points
    converge to a feasible point. Hence, for every $c>0$, there is
    some integer $0\leq L<\infty$ such that after the $L$-th accuracy
    refinement and replacement of $x^k$, $f(x^k)\geq f^*-c$.
  \item Part (ii) shows what happens when all function values~$f(x^k)$
    stay above the overestimate~$\varphi$ of~$f^*$---which
    particularly holds true \emph{after} possible refinements, if all
    the accuracy refinement phases are finite (and no termination
    occurs)---and we impose $\lambda_k \to 0$ for $k \to \infty$: We
    eventually obtain~$f(x^k)$ arbitrarily close to~$\varphi$, with
    vanishing feasibility violation as $k \to \infty$. Then, as well
    as in case of termination in Step~\ref{step:terminate_opt} or
    convergence in a refinement phase ($\ell\to\infty$), it may be
    desirable to restart the algorithm using a smaller~$\varphi$; see
    Section~\ref{subsect:VTVM}.
  \item The conditions $\norm{h^k} \geq \underline{H} > 0$, for all~$k$, in
    Theorem~\ref{thm:ISA_conv_over} imply that all subgradients used
    by the algorithm are nonzero. 
    These conditions are often automatically guaranteed, for
    example, if~$X$ is compact and no unconstrained optimum of~$f$
    lies in~$X$. In this case, $\norm{h} \geq \underline{H} > 0$ for all $h
    \in \partial f(x)$ and $x \in X$. Moreover, the same holds for a
    small enough open neighborhood of~$X$. Also, the norms of the
    subgradients are bounded from above. Thus, if we start close
    enough to~$X$ and restrict~$\varepsilon_k$ to be small enough, the
    conditions of Theorem~\ref{thm:ISA_conv_over} are
    fulfilled. Another example in which the conditions are satisfied
    appears in Section~\ref{sec:CompressedSensing}.
  \end{enumerate}
\end{remark}

\begin{theorem}[Convergence for dynamic step sizes with underestimation]\label{thm:ISA_conv_under}
  Let the set of optimal points~$X^*$ be bounded, $\varphi < f^*$, $0
  < \lambda_k \leq \beta < 2$ for all~$k$, and $\sum_{k=0}^{\infty}
  \lambda_k = \infty$. Let $(\nu_k)$ be a nonnegative sequence with
  $\sum_{k=0}^{\infty} \nu_k < \infty$, let
  \begin{equation}\label{eq:LkDef}
    L_k \define  \frac{\lambda_k(2 - \beta)(f_k -
      \varphi)}{\norm{h^k}^2}\big(f^* - f_k + \frac{\beta}{2 - \beta}(f^* - \varphi)\big),
  \end{equation}
  and let
  \begin{equation}\label{eq:epsISAunder}
    \tilde{\varepsilon}_k \define  - \left(\frac{\lambda_k(f_k - \varphi)}{\norm{h^k}}
      + \d_{X^*}(x^k)\right) + \sqrt{\left(\frac{\lambda_k(f_k - \varphi)}{\norm{h^k}}
        +\d_{X^*}(x^k)\right)^2 - L_k}.
  \end{equation}
  If the subgradients~$h^k$ satisfy $0 < \underline{H} \leq \norm{h^k} \leq
  \overline{H} < \infty$ and $(\varepsilon_k)$ satisfies $0 \leq \varepsilon_k
  \leq \min\{\abs{\tilde{\varepsilon}_k},\,\nu_k\}$ for all $k$, then
  the following holds.
  \begin{enumerate}
  \item[\emph{~(i)}] For any given $\delta > 0$, there exists some $K$
    such that $f_K \leq f^* +
    \frac{\beta}{2-\beta}(f^*-\varphi)+\delta$.
  \item[\emph{(ii)}] If additionally $\lambda_k \to 0$, then the
    sequence of objective function values $(f_k)$ of the ISA iterates
    $(x^k)$ converges to the optimal value $f^*$.
  \end{enumerate}
\end{theorem}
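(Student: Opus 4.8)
The plan is to mimic the classical Polyak-type underestimation analysis (as in \cite{AHKS87}) but carry the inexact-projection error through the basic recursion. The starting point is a one-step inequality for the squared distance to the optimal set. Fix $x^* \in X^*$ realizing $\d_{X^*}(x^k)$ (using boundedness of $X^*$ so the projection exists) and write $y^k \define x^k - \alpha_k h^k$, $x^{k+1} = \Pr_X^{\varepsilon_k}(y^k)$. Using \eqref{eq:IPr}, $\norm{x^{k+1}-\Pr_X(y^k)} \leq \varepsilon_k$, and since $x^* \in X$, nonexpansiveness of $\Pr_X$ gives $\norm{\Pr_X(y^k) - x^*} \leq \norm{y^k - x^*}$. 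Combining, $\norm{x^{k+1}-x^*} \leq \norm{y^k - x^*} + \varepsilon_k$. Expanding $\norm{y^k - x^*}^2 = \norm{x^k - x^*}^2 - 2\alpha_k (h^k)^\top(x^k - x^*) + \alpha_k^2\norm{h^k}^2$ and using the subgradient inequality $(h^k)^\top(x^k - x^*) \geq f_k - f^*$ together with $\alpha_k = \lambda_k(f_k-\varphi)/\norm{h^k}^2$, one gets, after taking square roots carefully or rather keeping things squared, a bound of the form
\begin{equation*}
\norm{x^{k+1}-x^*}^2 \leq \Big(\sqrt{\norm{x^k-x^*}^2 - 2\alpha_k(f_k - f^*) + \alpha_k^2\norm{h^k}^2} + \varepsilon_k\Big)^2.
\end{equation*}

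The second step is to see why the definitions \eqref{eq:LkDef}, \eqref{eq:epsISAunder} are exactly what is needed: the condition $\varepsilon_k \leq \abs{\tilde\varepsilon_k}$ is precisely the statement that the right-hand side above is at most $\norm{x^k - x^*}^2 - L_k$ with $L_k$ given by \eqref{eq:LkDef} — this is a quadratic-in-$\varepsilon_k$ inequality whose positive root (when the discriminant $(\alpha_k\norm{h^k} + \d_{X^*}(x^k))^2 - L_k$ is nonnegative) is $\tilde\varepsilon_k$. So I would first verify algebraically that $L_k \geq 0$ (it is, since $f_k \geq f^*$, $f_k > \varphi$, $\lambda_k < 2 \leq \beta$ wait — $\lambda_k \leq \beta$, and $f^* - f_k + \frac{\beta}{2-\beta}(f^*-\varphi)$ can be negative, which is the crux) and then derive the telescoped inequality $\norm{x^{k+1}-x^*}^2 \leq \norm{x^k - x^*}^2 - L_k$ for every $k$ up to termination. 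Summing over $k$ and using $\norm{x^k - x^*}^2 \geq 0$ forces $\sum_k L_k < \infty$, hence $L_k \to 0$ along a subsequence; since $\lambda_k(f_k-\varphi)/\norm{h^k}^2$ is bounded below by $\lambda_k \cdot (\text{const})$ only when $f_k - \varphi$ stays bounded away from zero, combined with $\sum \lambda_k = \infty$, this drives $f^* - f_k + \frac{\beta}{2-\beta}(f^*-\varphi) \to 0$ along a subsequence, i.e. $\liminf_k f_k \leq f^* + \frac{\beta}{2-\beta}(f^*-\varphi)$. That yields part (i): for any $\delta$ there is $K$ with $f_K \leq f^* + \frac{\beta}{2-\beta}(f^*-\varphi) + \delta$.

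For part (ii) I would refine this under the extra hypothesis $\lambda_k \to 0$. The idea, again standard for Polyak rules, is: the $\limsup$ is controlled because once $f_k$ exceeds the target level by a fixed margin, the distance to $X^*$ strictly decreases by a definite amount governed by $\lambda_k$; once $f_k$ drops near (or below) the level, the step $\alpha_k\norm{h^k} = \lambda_k(f_k-\varphi)/\norm{h^k}$ is small (because $\lambda_k$ is small and $f_k - \varphi$ is bounded by boundedness of $X^*$ and continuity, giving a bound on $\norm{x^k - x^*}$ via the monotone-decrease of the distance sequence, hence on $f_k$ through Lipschitz continuity of $f$ on a neighborhood), so $f_{k+1}$ cannot overshoot by more than $o(1)$. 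Making this precise: from the telescoping, $(\norm{x^k-x^*})$ is bounded, so $(f_k)$ is bounded; let $\bar f = \limsup f_k$. Assuming $\bar f > f^* + \frac{\beta}{2-\beta}(f^*-\varphi)$ for contradiction, pick a margin and use the two-regime argument plus $\sum\lambda_k = \infty$ to show the distances decrease without bound — contradiction. Combined with the $\liminf$ bound from (i) and the trivial $\liminf f_k \geq f^*$, we get $f_k \to f^*$ when $\varphi < f^*$ makes $\frac{\beta}{2-\beta}(f^*-\varphi)$ — wait, that term is negative, so actually $f^* + \frac{\beta}{2-\beta}(f^*-\varphi) < f^*$, and the $\liminf \geq f^*$ together with $\limsup \leq f^*$ (which must be re-derived; the level bound alone gives $\limsup \leq f^*$ only after the two-regime refinement since the level $f^* + \frac{\beta}{2-\beta}(f^*-\varphi)$ is below $f^*$) pins down $f_k \to f^*$.

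The main obstacle I anticipate is the sign bookkeeping around $L_k$ and the two-regime $\limsup$ argument in part (ii): because $\varphi < f^*$, the quantity $f^* - f_k + \frac{\beta}{2-\beta}(f^*-\varphi)$ has indefinite sign, so $L_k$ is not automatically the "good" (distance-decreasing) term — the telescoped inequality still holds as stated, but extracting convergence requires carefully separating the iterations where $f_k$ is large (genuine progress, exploiting $\sum\lambda_k=\infty$) from those where $f_k$ is already small (controlled overshoot, exploiting $\lambda_k\to 0$ and boundedness of $X^*$). Establishing the uniform lower Lipschitz/continuity bounds needed on a neighborhood of $X^*$ — so that $f_k$ small implies $x^k$ close to $X^*$ and vice versa — will be the fiddly part, but it follows from $H_\ell \leq \norm{h^k}$, $\norm{h^k}\leq H_u$, and boundedness of $X^*$ exactly as in Remark~\ref{rem:ISA_conv_over}(4).
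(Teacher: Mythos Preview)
Your basic one-step recursion is right, but the central algebraic claim---that $\varepsilon_k \leq |\tilde\varepsilon_k|$ yields the telescoped inequality $\norm{x^{k+1}-x^*}^2 \leq \norm{x^k-x^*}^2 - L_k$---is false. The root $\tilde\varepsilon_k$ in \eqref{eq:epsISAunder} solves $\varepsilon^2 + 2\big(\alpha_k\norm{h^k} + d_{X^*}(x^k)\big)\varepsilon + L_k = 0$, so $\varepsilon_k \leq \tilde\varepsilon_k$ (when $L_k \leq 0$) only makes the bracketed error in Lemma~\ref{lem:4} nonpositive, i.e.\ gives $d_{X^*}(x^{k+1})^2 \leq d_{X^*}(x^k)^2$: monotone decrease, \emph{not} decrease by $|L_k|$. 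You therefore cannot conclude $\sum_k L_k < \infty$ by telescoping. The paper uses the $\tilde\varepsilon_k$-bound precisely for this monotonicity (via Lemma~\ref{lem:5}, yielding boundedness of the iterates and hence of $(f_k)$), but for the contradiction in part~(i) it sums the \emph{full} Lemma~\ref{lem:4} recursion and controls the $\varepsilon_k$-error terms separately through $\varepsilon_k \leq \nu_k$ with $\sum\nu_k < \infty$. That is why the hypothesis has \emph{two} upper bounds on $\varepsilon_k$, playing different roles; your proposal collapses them into one and loses the summation argument.

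Part~(ii) has two further gaps. First, a sign slip: since $\varphi < f^*$, the quantity $\frac{\beta}{2-\beta}(f^*-\varphi)$ is \emph{positive}, so the level in part~(i) lies \emph{above} $f^*$; part~(i) alone therefore does not give $\limsup f_k \leq f^*$. The missing idea is that $\lambda_k \to 0$ allows one to re-run the part~(i) argument from late indices with $\beta$ replaced by arbitrarily small constants, driving the level down to $f^*$. Second, your ``trivial $\liminf f_k \geq f^*$'' is not trivial here: iterates can be infeasible and $\varphi < f_k < f^*$ can occur (cf.\ the Remark following the theorem). The paper handles this by a case split---treating separately the subsequence with $f_k < f^*$, which must converge to $f^*$ by asymptotic feasibility ($\varepsilon_k \to 0$), and the subsequence with $f_k \geq f^*$, for which an index-block argument (as in the proofs of Theorems~\ref{thm:ISA_conv_apriori} and~\ref{thm:ISA_conv_over}) rules out accumulation points above $f^*$. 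Your two-regime $\limsup$ sketch would need this same infeasible/feasible split to go through.
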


\begin{remark}\label{rem:ISA_conv_under}\
  \begin{enumerate}
  \item If $f(x^k) \leq \varphi < f^*$,
    Steps~\ref{step:subgradient}--\ref{step:refine_proj} ensure that
    after a finite number of projection refinements~$x^k$ satisfies
    $\varphi < f(x^{k})$. Thus, the algorithm will never terminate
    with Step~\ref{step:terminate_opt} and every refinement phase is
    finite.
  \item Moreover, infeasible points~$x^k$ with $\varphi < f(x^k) <
    f^*$ are possible. Hence, the inequality in
    Theorem~\ref{thm:ISA_conv_under}~(i) may be satisfied too soon to
    provide conclusive information regarding solution quality.
    Interestingly, part (ii) shows that by letting the parameters
    $(\lambda_k)$ tend to zero, one can nevertheless establish
    convergence to the optimal value $f^*$ (and $d_{\X}(x^k)\leq
    d_{{\X}^{*}}(x^k)\to 0$, i.e., asymptotic feasibility).
  \item Theoretically, small values of~$\beta$ yield smaller errors,
    while in practice this restricts the method to very small steps
    (since $\lambda_k \leq \beta$), resulting in slow
    convergence. This illustrates a typical kind of trade-off between
    solution accuracy and speed.
  \item The use of $\abs{\tilde{\varepsilon}_k}$ in
    Theorem~\ref{thm:ISA_conv_under} avoids conflicting bounds on
    $\varepsilon_k$ in case $L_k > 0$. Because $0 \leq \varepsilon_k
    \leq \nu_k$ holds notwithstanding, $0\leq \varepsilon_k\to 0$ is
    maintained.
  \item The same statements on lower and upper bounds on~$\norm{h^k}$
    as in Remark~\ref{rem:ISA_conv_over} apply in the context of
    Theorem~\ref{thm:ISA_conv_under}.
  \end{enumerate}
\end{remark}

\section{Convergence of ISA}\label{sect:proofs}

From now on, let $(x^k)$ denote the sequence of points with
corresponding objective function values $(f_k)$ and subgradients
$(h^k)$, $h^k\in\partial f(x^k)$, as generated by ISA in the
respective variant under consideration.

Let us consider some basic inequalities which will be essential in
establishing our main results. The exact Euclidean projection is
nonexpansive, therefore
\begin{equation}\label{eq:distPrx}
  \lVert \Pr_\X(y)-x\rVert\leq \norm{y-x} \quad\forall x\in\X.
\end{equation}
Hence, for the adaptive approximate projection $\Pr_\X^{\varepsilon}$ we have, by
(\ref{eq:IPr}) and (\ref{eq:distPrx}), for all $x\in\X$
\begin{align}
  \nonumber\lVert\Pr_\X^{\varepsilon}(y)-x\rVert&=\lVert\Pr_\X^{\varepsilon}(y)-\Pr_\X(y)+\Pr_\X(y)-x\rVert\\
  \label{eq:distIPrx}&\leq\lVert\Pr_\X^{\varepsilon}(y)-\Pr_\X(y)\rVert+\lVert\Pr_\X(y)-x\rVert\leq\varepsilon +\lVert y-x\rVert.
\end{align}
At some iteration $k$, let $x^{k+1}$ be produced by ISA using some
step size~$\alpha_k$ and write $y^{k}\define x^k-\alpha_k h^k$. We
thus obtain for every $x\in\X$: 
\begingroup 
\allowdisplaybreaks
\begin{align}
  \nonumber &\lVert x^{k+1}-x\rVert^2=\lVert\Pr_\X^{\varepsilon_k}(y^{k})-x\rVert^2\\
  \nonumber \leq~&\left(\lVert y^{k}-x\rVert+\varepsilon_k\right)^2=\lVert y^{k}-x\rVert^2+2\,\lVert y^{k}-x\rVert\,\varepsilon_k+\varepsilon_k^2\\
  \nonumber =~&\lVert x^k-x\rVert^2-2\,\alpha_k(h^k)^\top(x^k-x)+\alpha_k^2\,\norm{h^k}^2+2\,\lVert y^{k}-x\rVert\, \varepsilon_k+\varepsilon_k^2\\
  \nonumber \leq~&\lVert x^k-x\rVert^2-2\,\alpha_k(f_k-f(x))+\alpha_k^2\,\norm{h^k}^2+2\lVert x^k-x\rVert\varepsilon_k+2\,\alpha_k\,\varepsilon_k\norm{h^k}+\varepsilon_k^2\\
  \label{eq:distIPrxSq}=~&\lVert x^k-x\rVert^2-2\,\alpha_k(f_k-f(x))+\left(\alpha_k\,\norm{h^k} +\varepsilon_k\right)^2+2\,\lVert x^k-x\rVert\,\varepsilon_k,
\end{align}
\endgroup where the second inequality follows from the subgradient
definition \eqref{eq:sgdef} and the triangle inequality. Note that the
above inequalities \eqref{eq:distPrx}--\eqref{eq:distIPrxSq} hold in
particular for every optimal point $x^*\in\X^*$.

\subsection{ISA with predetermined step size sequence}\label{sect:ISA_conv_apriori}

The proof of the convergence of the ISA iterates~$x^k$ is somewhat
more involved than for the classical subgradient method as, e.g.,
in~\cite{S85}. This is due to the additional error terms by adaptive approximate
projection and the fact that $f_k\geq f^*$ is not guaranteed since the
iterates may be infeasible.
\medskip

\noindent
\textbf{Proof of Theorem~\ref{thm:ISA_conv_apriori}.}~ We rewrite the
estimate \eqref{eq:distIPrxSq} with $x=x^*\in X^*$ as
  \begin{equation}
    \label{eq:basic_isa_divergent_series}
    \lVert x^{k+1}-x^*\rVert^2 \leq \lVert
    x^k-x^*\rVert^2-2\,\alpha_k\,(f_k-f^*)+\underbrace{\left(\alpha_k\lVert
        h^k\rVert +\varepsilon_k\right)^2+2\,\lVert
      x^k-x^*\rVert\,\varepsilon_k}_{\eqqcolon\beta_k}
  \end{equation}
  and obtain (by applying \eqref{eq:basic_isa_divergent_series} for
  $k=0,\dots,m$)
  \begin{align}
    \nonumber \lVert x^{m+1}-x^*\rVert^2~\leq~&\lVert
    x^0-x^*\rVert^2-2\sum_{k=0}^m (f_k-f^*)\alpha_k+\sum_{k=0}^m
    \beta_k.
  \end{align}
  Our first goal is to show that $\sum_k\beta_k$ is a convergent
  series. Using $\norm{h^k}\leq H$ and denoting $A\define
  \sum_{k=0}^\infty \alpha_k^2$, we get
  \begin{equation}
    \label{eq:3}
    \nonumber \sum_{k=0}^m\beta_k~\leq~
    AH^2+\sum_{k=0}^m \varepsilon_k^2+2H\sum_{k=0}^m \alpha_k \varepsilon_k +2\sum_{k=0}^m \lVert x^k-x^*\rVert\varepsilon_k.
  \end{equation}
  Now denote $D \define \lVert x^0-x^*\rVert$ and consider the last
  term (without the factor $2$): 
  \begingroup \allowdisplaybreaks
  \begin{align}
    \nonumber &\sum_{k=0}^m \lVert x^k-x^*\rVert\varepsilon_k~=~D\,\varepsilon_0+\sum_{k=1}^m\big\lVert\Pr_\X^{\varepsilon_{k-1}}\left(x^{k-1}-\alpha_{k-1}h^{k-1}\right)-x^*\big\rVert\,\varepsilon_k\\
    \nonumber \leq~&D\,\varepsilon_0+\sum_{k=1}^m\big\lVert\Pr_\X^{\varepsilon_{k-1}}\left(x^{k-1}-\alpha_{k-1}h^{k-1}\right)-\Pr_\X\left(x^{k-1}-\alpha_{k-1}h^{k-1}\right)\big\rVert\,\varepsilon_k\\
    \nonumber &\qquad\hspace*{-2pt}+\sum_{k=1}^m\big\lVert\Pr_\X\left(x^{k-1}-\alpha_{k-1}h^{k-1}\right)-x^*\big\rVert\,\varepsilon_k\\
    \nonumber \leq~&D\,\varepsilon_0+\sum_{k=1}^m\varepsilon_{k-1}\varepsilon_k +\sum_{k=1}^m\big\lVert x^{k-1}-\alpha_{k-1}h^{k-1}-x^*\big\rVert\,\varepsilon_k\\
    \nonumber \leq~&D\,\varepsilon_0+\sum_{k=0}^{m-1}\varepsilon_k\varepsilon_{k+1}+\sum_{k=0}^{m-1}\lVert x^k-x^*\rVert\,\varepsilon_{k+1}+\sum_{k=0}^{m-1}\norm{h^k}\,\alpha_k\,\varepsilon_{k+1}\\
    \label{eq:distweg1} \leq~&D\,(\varepsilon_0+\varepsilon_1)+\sum_{k=0}^{m-1}\varepsilon_k\varepsilon_{k+1}+\sum_{k=1}^{m-1}\lVert x^k-x^*\rVert\,\varepsilon_{k+1}+H\sum_{k=0}^{m-1}\alpha_k\,\varepsilon_{k+1}.
  \end{align}
  \endgroup Repeating this procedure to eliminate all terms $\lVert
  x^k-x^*\rVert$ for $k>0$, we obtain 
  \begingroup 
  \allowdisplaybreaks
  \begin{align}
    \nonumber \text{(\ref{eq:distweg1})}~\leq~\dots~\leq~&D\sum_{k=0}^m \varepsilon_k+\sum_{j=1}^m\Big(\sum_{k=0}^{m-j}\varepsilon_k \varepsilon_{k+j}+H\sum_{k=0}^{m-j}\alpha_k\varepsilon_{k+j}\Big)\\
    \label{eq:distweg2} =~&D\sum_{k=0}^m\varepsilon_k+\sum_{j=1}^{m}\sum_{k=0}^{m-j}(\varepsilon_k+H\alpha_k)\,\varepsilon_{k+j}.
  \end{align}
  \endgroup Using the above chain of inequalities, \eqref{eq:eps} and
  \eqref{eq:alphageqepsrest}, and the abbreviation $E\define
  \sum_{k=0}^\infty \varepsilon_k$, we finally get: 
  \begingroup
  \allowdisplaybreaks
  \begin{align}
    \nonumber &\lVert x^{m+1}-x^*\rVert^2 + 2\sum_{k=0}^m (f_k-f^*)\,\alpha_k \leq D^2 + \sum_{k=0}^m \beta_k\\
    \nonumber\leq~&D^2+AH^2+\sum_{k=0}^m\varepsilon_k^2+2\,H\sum_{k=0}^m\alpha_k\varepsilon_k+2\,D\sum_{k=0}^m \varepsilon_k+2\sum_{j=1}^{m}\sum_{k=0}^{m-j}(\varepsilon_k+H\alpha_k)\,\varepsilon_{k+j}\\
    \nonumber \leq~&D^2+AH^2+2\,D\sum_{k=0}^m\varepsilon_k+2\sum_{j=0}^m \sum_{k=0}^{m-j}\varepsilon_k\varepsilon_{k+j}+2\,H\sum_{j=0}^m\sum_{k=0}^{m-j}\alpha_k\varepsilon_{k+j}\\
    \nonumber =~&D^2+AH^2+2\,D\sum_{k=0}^m\varepsilon_k+2\sum_{j=0}^m\Big(\varepsilon_j\sum_{k=j}^m\varepsilon_k\Big)+2\,H\sum_{j=0}^m\Big(\alpha_j\sum_{k=j}^m\varepsilon_k\Big)\\
    \nonumber \leq~&D^2+AH^2+2\,D\sum_{k=0}^m\varepsilon_k+2\sum_{j=0}^m E\, \varepsilon_j+2H\sum_{j=0}^m \alpha_j\, \alpha_j\\
    \nonumber \leq~&D^2+AH^2+2\,(D+E)\sum_{k=0}^m\varepsilon_k+2\,H\sum_{k=0}^m\alpha_k^2\\
    \label{eq:5} \leq~&(D+E)^2+E^2+(2+H)\,A\,H~=:~R~<~\infty.
  \end{align}
  \endgroup
  
  Since the iterates $x^k$ may be infeasible, possibly $f_k<f^*$, and
  hence the second term on the left hand side of~\eqref{eq:5} might
  be negative. Therefore, we distinguish two cases:
  
  \begin{enumerate}
  \item[i)] If $f_k\geq f^*$ for all but finitely many $k$, we can
    assume without loss of generality that $f_k\geq f^*$ for all $k$
    (by considering only the ``later'' iterates). Now, because
    $f_k\geq f^*$ for all $k$,
    \[
    \sum_{k=0}^{m}(f_k-f^*)\,\alpha_k \geq
    \sum_{k=0}^{m}\Big(\underbrace{\min_{j=0,\dots,m} f_j}_{\eqqcolon f^*_m}-f^*\Big)\,\alpha_k = (f^*_m-f^*)\sum_{k=0}^{m}\alpha_k.
    \]
    Together with (\ref{eq:5}) this yields
    \[
    0\leq 2\,(f^*_m-f^*)\sum_{k=0}^{m}\alpha_k\leq R\quad\Longleftrightarrow\quad 0\leq f^*_m-f^*\leq\frac{R}{2\sum_{k=0}^{m}\alpha_k} .
    \]
    Thus, because $\sum_{k=0}^{m}\alpha_k$ diverges, we have $f^*_m\to
    f^*$ for $m\to \infty$ (and, in particular,
    $\liminf_{k\to\infty}f_k = f^*$).

    \hspace*{1.5em}To show that $f^*$ is in fact the only possible
    accumulation point (and hence the limit) of $(f_k)$, assume that
    $(f_k)$ has another accumulation point strictly larger than $f^*$,
    say $f^*+\eta$ for some $\eta>0$.  Then, both cases $f_k<
    f^*+\tfrac{1}{3}\eta$ and $f_k> f^*+\tfrac{2}{3}\eta$ must occur
    infinitely often. We can therefore define two index subsequences
    $(m_{\ell})$ and $(n_{\ell})$ by setting $n_{(-1)}\define -1$ and,
    for $\ell\geq 0$,
    \begin{align*}
      m_{\ell} &\define \min\{\, k \suchthat k>n_{\ell-1},~f_k> f^*+\tfrac{2}{3}\eta\,\},\\
      n_{\ell} &\define \min\{\, k \suchthat k>m_{\ell},~f_k< f^*+\tfrac{1}{3}\eta\,\}.
    \end{align*}
    Figure \ref{fig:updown} illustrates this choice of indices.
    \begin{figure}[t]
      \centering
      \includegraphics[width=0.9\textwidth]{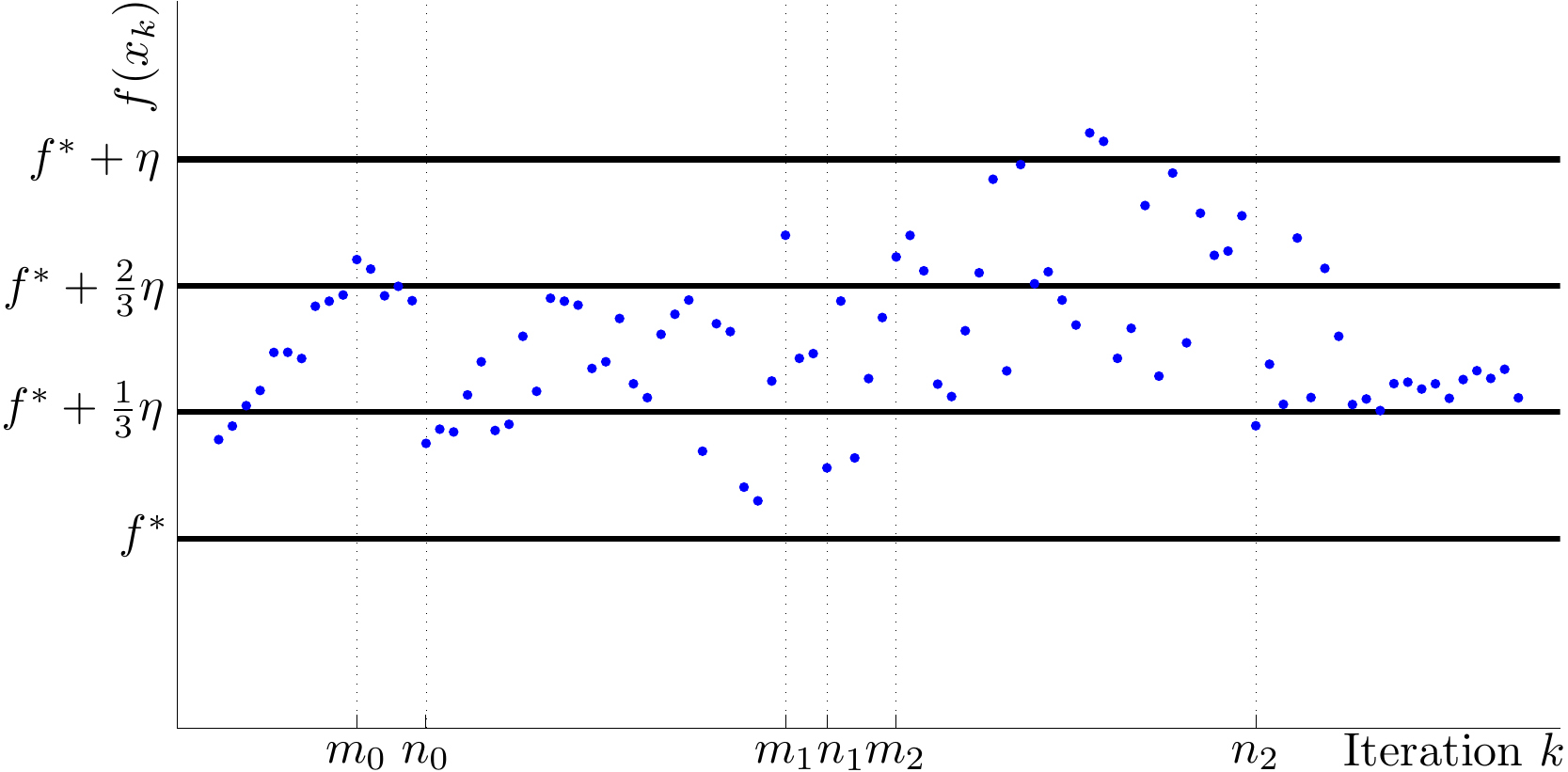}
      \caption{The sequences $(m_\ell)$ and $(n_\ell)$.}
      \label{fig:updown}
    \end{figure}     
    Now observe that for any $\ell$, 
    \begin{align}
      \nonumber \tfrac{1}{3}\eta &< f_{m_{\ell}}-f_{n_{\ell}} \leq H\cdot\norm{x^{n_{\ell}}-x^{m_{\ell}}} 
      \leq H\left(\norm{x^{n_{\ell}-1}-x^{m_{\ell}}}+H\alpha_{n_{\ell}-1}+\varepsilon_{n_{\ell}-1}\right)\\
      \label{eq:est_fdiff_eta_i} &\leq \dots \leq H^2 \sum_{j=m_{\ell}}^{n_{\ell}-1}\alpha_j + H\sum_{j=m_{\ell}}^{n_{\ell}-1}\varepsilon_j,
    \end{align}
    where the second inequality is obtained similar to~\eqref{eq:distweg1}.
    For a given $m$, let $\ell_{m}\define\max\{\,\ell \suchthat
    n_{\ell}-1\leq m\,\}$ be the number of blocks of indices between
    two consecutive indices $m_\ell$ and $n_\ell-1$ until~$m$. We
    obtain:
    \begin{equation}\label{eq:est_fdiff_eta_i_sum}
    \tfrac{1}{3}\sum_{\ell=0}^{\ell_m}\eta\leq H^2\sum_{\ell=0}^{\ell_m}\sum_{j=m_{\ell}}^{n_{\ell}-1}\alpha_j +H\sum_{\ell=0}^{\ell_m}\sum_{j=m_{\ell}}^{n_{\ell}-1}\varepsilon_j
    \leq H^2\sum_{\ell=0}^{\ell_m}\sum_{j=m_{\ell}}^{n_{\ell}-1}\alpha_j +HE.
    \end{equation}
    For $m\to\infty$, the left hand side tends to infinity, and since
    $HE < \infty$, this implies that
    \[
    \sum_{\ell=0}^{\ell_m}\sum_{j=m_{\ell}}^{n_{\ell}-1}\alpha_j\to\infty.
    \]
    Then, since $\alpha_k>0$ and $f_k\geq f^*$ for all $k$, (\ref{eq:5})
    yields
    \begin{align*}
      \infty &>R \geq \norm{x^{m+1}-x^*}^2+2\sum_{k=0}^{m}(f_k-f^*)\alpha_k \geq 2\sum_{k=0}^{m}(f_k-f^*)\alpha_k\\
      &\geq 2\sum_{\ell=0}^{\ell_m}\sum_{j=m_{\ell}}^{n_{\ell}-1}\underbrace{(f_j-f^*)}_{>\tfrac{1}{3}\eta}\alpha_j
      > \tfrac{2}{3}\eta \sum_{\ell=0}^{\ell_m}\sum_{j=m_{\ell}}^{n_{\ell}-1}\alpha_j.
    \end{align*}
    But for $m\to\infty$, this yields a contradiction since the sum on
    the right hand side diverges. Hence, there does not exist an
    accumulation point strictly larger than $f^*$, so we can conclude
    $f_k\to f^*$ as $k\to\infty$, i.e., the whole sequence $(f_k)$
    converges to $f^*$.

    \hspace*{1.5em}We now consider convergence of the sequence $(x^k$).
    From~\eqref{eq:5} we conclude that both terms on the left hand
    side are bounded independently of~$m$. In particular this means
    $(x^k)$ is a bounded sequence. Hence, by the
    Bolzano-Weierstra{\ss} Theorem, it has a convergent subsequence
    $(x^{k_i})$ with $x^{k_i}\to \overline{x}$ (as $i\to\infty$) for
    some $\overline{x}$. To show that the full sequence $(x^k)$
    converges to $\overline{x}$, take any $K$ and any $k_i < K$ and
    observe from~\eqref{eq:basic_isa_divergent_series} that
    \[
    \norm{x^K - \overline{x}}^2 \leq \norm{x^{k_i} - \overline{x}}^2 + \sum_{j=k_i}^{K-1}\beta_j.
    \]
    Since $\sum_k \beta_k$ is a convergent series (as seen from the
    second last line of~\eqref{eq:5}), the right hand side becomes
    arbitrarily small for $k_i$ and~$K$ large enough.  This implies
    $x^k\to \overline{x}$, and since $\varepsilon_k \to 0$, $f_k \to
    f^*$, and $\X^*$ is closed, $\overline{x} \in \X^*$ must hold.
    
  \item[ii)] Now consider the case where $f_k<f^*$ occurs infinitely
    often.  We write $(f^-_k)$ for the subsequence of $(f_k)$ with
    $f_k<f^*$ and $(f^+_k)$ for the subsequence with $f_k\geq
    f^*$. Clearly $f^-_k\to f^*$. Indeed, the corresponding iterates
    are asymptotically feasible (since the projection accuracy
    $\varepsilon_k$ tends to zero), and hence $f^*$ is the only
    possible accumulation point of $(f^-_k)$.
    
    \hspace*{1.5em}Denoting $M^-_m = \{k\leq m\suchthat f_k<f^*\}$ and
    $M^+_m = \{k\leq m\suchthat f_k\geq f^*\}$, we conclude
    from~\eqref{eq:5} that
    \begin{equation}
      \label{eq:est_f_splitted}
      \norm{x^{m+1}-x^*}^2 + 2\sum_{k\in M^+_m} (f_k-f^*)\,\alpha_k\leq R + 2\sum_{k\in
        M^-_m} (f^*-f_k)\,\alpha_k.
    \end{equation}
    Note that each summand is non-negative. To see that the right hand
    side is bounded independently of $m$, let $y^{k-1} =
    x^{k-1}-\alpha_{k-1}h^{k-1}$, and observe that here ($k\in
    M^-_m$), due to $f_k< f^*\leq f(\Pr_X(y^{k-1}))$, we have
    \begin{align*}
      f^*-f_k &\leq f\big(\Pr_X(y^{k-1})\big)-f\big(\Pr_X^{\varepsilon_{k-1}}(y^{k-1})\big)\\
      &\leq (h^{k-1})^\top\big(\Pr_X(y^{k-1})-\Pr_X^{\varepsilon_{k-1}}(y^{k-1})\big)\\
      &\leq \norm{h^{k-1}}\cdot \big\lVert\Pr_X(y^{k-1})-\Pr_X^{\varepsilon_{k-1}}(y^{k-1})\big\rVert \leq H\varepsilon_{k-1},
    \end{align*}
    using the subgradient and Cauchy-Schwarz inequalities as well as
    pro\-perty~\eqref{eq:IPr} of $\Pr_X^\varepsilon$ and the boundedness
    of the subgradient norms. From~\eqref{eq:est_f_splitted}, using
    \eqref{eq:alpha} and \eqref{eq:alphageqepsrest}, we thus obtain
    \begin{align}
      \nonumber &\norm{x^{m+1}-x^*}^2 + 2\sum_{k\in M^+_m} (f_k-f^*)\alpha_k \leq R + 2\,H\sum_{k\in M^-_m} \alpha_k\, \varepsilon_{k-1}\\
      \label{eq:est_f_splitted_var} \leq~ &R + 2\,H\sum_{k\in M^-_m} \alpha_k \alpha_{k-1} \leq R+2H\sum_{k=0}^{\infty}\alpha_k \alpha_{k-1} \leq R+4\,A\,H <\infty.
    \end{align}
    Similar to case i), we conclude that both the sequence $(x^k)$ and
    the series $\sum_{k\in M^+_m} (f_k-f^*)\,\alpha_k$ are bounded.
    
    \hspace*{1.5em}It remains to show that $f^+_k \to f^*$. Assume to
    the contrary that $(f^+_k)$ has an accumulation point $f^*+\eta$
    for $\eta>0$.  Similar to before, we construct index subsequences
    $(m_\ell)$ and $(p_\ell)$ as follows: Set $p_{(-1)}\define -1$ and
    define, for $\ell\geq 0$,
    \begin{align*}
      m_\ell &\define \min\{\,k\in M^+_\infty\suchthat k>p_{\ell-1},\,f_k>f^*+\tfrac{2}{3}\eta\,\},\\
      p_\ell &\define \min\{\,k\in M^-_\infty\suchthat k>m_{\ell}\,\}.
    \end{align*}
    Then $m_{\ell},\dots,p_{\ell}-1\in M^+_\infty$ for all $\ell$, and
    we have
    \[
    \tfrac{2}{3}\eta < f_{m_{\ell}}-f_{p_{\ell}}\leq H^2\sum_{j=m_{\ell}}^{p_{\ell}-1}\alpha_j +H\sum_{j=m_{\ell}}^{p_{\ell}-1}\varepsilon_j .
    \]
    Therefore, with $\ell_m\define\max\{\,\ell\,\vert\,p_{\ell}-1\leq
    m\,\}$ for a given $m$,
    \[
    \tfrac{2}{3}\sum_{\ell=0}^{\ell_m}\eta \leq H^2\sum_{\ell=0}^{\ell_m}\sum_{j=m_{\ell}}^{p_{\ell}-1}\alpha_j+H\sum_{\ell=0}^{\ell_m}\sum_{j=m_{\ell}}^{p_{\ell}-1}\varepsilon_j
    \leq H^2\sum_{\ell=0}^{\ell_m}\sum_{j=m_{\ell}}^{p_{\ell}-1}\alpha_j+H\,E.
    \]
    Now the left hand side becomes arbitrarily large as $m\to\infty$,
    so that also
    $\sum_{\ell=0}^{\ell_m}\sum_{j=m_{\ell}}^{p_{\ell}-1}\alpha_j\to\infty$,
    since $HE<\infty$.  Note that because $\alpha_k >0$ and
    \[
    \sum_{\ell=0}^{\ell_m}\sum_{j=m_{\ell}}^{p_{\ell}-1}\alpha_j\leq\sum_{k\in
      M^+_m}\alpha_k,
    \]
    this latter series must diverge as well. As a consequence, $f^*$
    is itself an (other) accumulation point of $(f^+_k)$:
    From~\eqref{eq:est_f_splitted_var} we have
    \begin{align*}
      \infty &>R+4AH \geq 2\sum_{k\in M^+_m}(f_k-f^*)\alpha_k\\
      &\geq \sum_{k\in M^+_m}(\underbrace{\min\{\,f_j\,\vert\,j\in M^+_m,\,j\leq m\,\}}_{\eqqcolon\hat{f}^*_m}-f^*)\,\alpha_k = (\hat{f}^*_m-f^*)\sum_{k\in M^+_m}\alpha_k ,
    \end{align*}
    and thus 
    \[
    0\leq \hat{f}^*_m-f^*\leq\frac{R + 4\, A\, H}{\sum_{k\in M^+_m}\alpha_k}\to 0\qquad {\rm as}~m\to\infty,
    \]
    since $\sum_{k\in M^+_m}\alpha_k$ diverges. But then, knowing
    $(\hat{f}^*_k)$ converges to $f^*$, we can use $(m_{\ell})$ and
    another index subsequence $(n_{\ell})$, given by
    \[
    n_{\ell} \define \min\{\,k\in M^+_\infty \,\vert\,k>m_{\ell},\,f_k<f^*+\tfrac{1}{3}\eta\,\},
    \]
    to proceed analogously to case i) to arrive at a contradiction and
    conclude that no $\eta>0$ exists such that $f^*+\eta$ is an
    accumulation point of $(f^+_k)$.
    
    \hspace*{1.5em}On the other hand, since $(x^k)$ is bounded and $f$
    is continuous on a neighborhood of $\X$ (recall that for all $k$,
    $x^k$ is contained in an $\varepsilon_k$-neighborhood of $\X$), $(f^+_k)$ is
    bounded. Thus, it must have at least one accumulation point. Since
    $f_k\geq f^*$ for all $k\in M^+_\infty$, the only possibility left
    is $f^*$ itself. Hence, $f^*$ is the unique accumulation point
    (i.e., the limit) of the sequence $(f^+_k)$. As this is also true
    for $(f^-_m)$, the whole sequence $(f_k)$ converges to $f^*$.
    
    \hspace*{1.5em}Finally, convergence of the bounded sequence $(x^k)$
    to some $\overline{x}\in\X^*$ can now be obtained just like in
    case i), completing the proof.  \qed
  \end{enumerate}

\subsection{ISA with dynamic Polyak-type step sizes}\label{sect:ISA_conv_dynamic}

Let us now turn to dynamic step sizes. In the rest of this section, $\alpha_k$ will always denote
step sizes of the form \eqref{eq:alphak}.

Since in subgradient methods the objective function values need not
decrease monotonically, the key quantity in convergence proofs usually
is the distance to the optimal set $\X^*$. For ISA with dynamic
step sizes (Algorithm \ref{alg:DynamicISA}), we have the following
result concerning these distances:

\begin{lemma}\label{lem:1}
  Let $x^*\in\X^*$. For the sequence of ISA iterates $(x^k)$, computed
  with step sizes $\alpha_k=\lambda_k(f_k-\varphi)/\norm{h^k}^2$, it
  holds that
  \begin{align}
    \nonumber\lVert x^{k+1}-x^*\rVert^2~\leq~&\lVert x^k-x^*\rVert^2+\varepsilon_k^2+2\left(\frac{\lambda_k(f_k-\varphi)}{\norm{h^k}}+\lVert x^k-x^*\rVert\right)\varepsilon_k\\
    \label{eq:lem1}&+\frac{\lambda_k(f_k-\varphi)}{\norm{h^k}^2}\big(\lambda_k(f_k-\varphi)-2(f_k-f^*)\big).    
  \end{align}
  In particular, also
  \begin{equation}\label{eq:distXstar}
    \d_{X^*}(x^{k+1})^2\leq\d_{X^*}(x^k)^2-2\,\alpha_k(f_k-f^*)+(\alpha_k\norm{h^k} + \varepsilon_k)^2+2\,\d_{X^*}(x^k)\,\varepsilon_k.
  \end{equation}
\end{lemma}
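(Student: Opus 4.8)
The plan is to start from the general ISA estimate~\eqref{eq:distIPrxSq}, specialize it to the dynamic step size $\alpha_k = \lambda_k(f_k-\varphi)/\norm{h^k}^2$, and then rearrange. Setting $x = x^*$ in~\eqref{eq:distIPrxSq} gives
\begin{equation}\label{eq:planstart}
\norm{x^{k+1}-x^*}^2 \leq \norm{x^k-x^*}^2 - 2\alpha_k(f_k-f^*) + (\alpha_k\norm{h^k}+\varepsilon_k)^2 + 2\norm{x^k-x^*}\varepsilon_k .
\end{equation}
The first step is to expand $(\alpha_k\norm{h^k}+\varepsilon_k)^2 = \alpha_k^2\norm{h^k}^2 + 2\alpha_k\norm{h^k}\varepsilon_k + \varepsilon_k^2$ and then substitute $\alpha_k\norm{h^k}^2 = \lambda_k(f_k-\varphi)$ and $\alpha_k\norm{h^k} = \lambda_k(f_k-\varphi)/\norm{h^k}$ wherever these products appear. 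Concretely, $\alpha_k^2\norm{h^k}^2 = \alpha_k\cdot\lambda_k(f_k-\varphi) = \lambda_k^2(f_k-\varphi)^2/\norm{h^k}^2$, and $2\alpha_k\norm{h^k}\varepsilon_k = 2\lambda_k(f_k-\varphi)\varepsilon_k/\norm{h^k}$. Likewise the linear term becomes $-2\alpha_k(f_k-f^*) = -2\lambda_k(f_k-\varphi)(f_k-f^*)/\norm{h^k}^2$.

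The second step is to collect the $\varepsilon_k$-terms and the $\lambda_k$-terms separately. The terms carrying a single factor of $\varepsilon_k$ are $2\norm{x^k-x^*}\varepsilon_k$ (already present in~\eqref{eq:planstart}) and $2\lambda_k(f_k-\varphi)\varepsilon_k/\norm{h^k}$ (from the expansion); these combine into $2\bigl(\lambda_k(f_k-\varphi)/\norm{h^k} + \norm{x^k-x^*}\bigr)\varepsilon_k$, which is exactly the cross term in~\eqref{eq:lem1}. The remaining $\varepsilon_k^2$ stands alone. The terms with no $\varepsilon_k$ are $-2\lambda_k(f_k-\varphi)(f_k-f^*)/\norm{h^k}^2$ and $\lambda_k^2(f_k-\varphi)^2/\norm{h^k}^2$; factoring out $\lambda_k(f_k-\varphi)/\norm{h^k}^2$ yields $\frac{\lambda_k(f_k-\varphi)}{\norm{h^k}^2}\bigl(\lambda_k(f_k-\varphi) - 2(f_k-f^*)\bigr)$, matching the last line of~\eqref{eq:lem1}. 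This establishes the first claim; it is essentially bookkeeping with no real obstacle, the only point of care being to keep track of which occurrences of $\alpha_k$ get one versus two powers of $\norm{h^k}$ absorbed.

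For the second claim~\eqref{eq:distXstar}, I would argue that all the estimates leading to~\eqref{eq:distIPrxSq} — in particular the nonexpansiveness bound~\eqref{eq:distPrx} and the inexact-projection bound~\eqref{eq:distIPrx} — were derived for an \emph{arbitrary} fixed $x^*\in\X^*$, and the right-hand side of~\eqref{eq:planstart} depends on $x^*$ only through the term $\norm{x^k-x^*}^2$ (the two other occurrences of $\norm{x^k-x^*}$ having been absorbed into $\alpha_k$-expressions that are independent of $x^*$; note $(\alpha_k\norm{h^k}+\varepsilon_k)^2$ has no $x^*$). Since $x^{k+1}$ is a fixed point, $\norm{x^{k+1}-x^*}^2 \geq \d_{X^*}(x^{k+1})^2$; choosing $x^*$ to be the projection $\Pr_{X^*}(x^k)$ makes $\norm{x^k-x^*} = \d_{X^*}(x^k)$, and then~\eqref{eq:planstart} rewritten as in~\eqref{eq:distIPrxSq} gives precisely~\eqref{eq:distXstar}. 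The only subtlety worth flagging is that $\X^*$ must be nonempty (guaranteed by assumption~\eqref{eq:Xstar}) and closed so that the projection onto it is well defined — which holds since $\X^*$ is the solution set of a convex program with continuous objective over a closed set — so that the substitution $x^* = \Pr_{X^*}(x^k)$ is legitimate.
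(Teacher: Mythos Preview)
Your proof is correct and follows essentially the same approach as the paper: plug the dynamic step size into~\eqref{eq:distIPrxSq} with $x=x^*$ and rearrange to obtain~\eqref{eq:lem1}, then for~\eqref{eq:distXstar} specialize $x^* = \Pr_{\X^*}(x^k)$ (legitimate since $\X^*$ is closed) and use $\norm{x^{k+1}-x^*}\geq \d_{\X^*}(x^{k+1})$. One minor slip in your exposition: the right-hand side of your display~\eqref{eq:planstart} still depends on $x^*$ through the linear term $2\norm{x^k-x^*}\varepsilon_k$, not only through $\norm{x^k-x^*}^2$ --- but your substitution $x^*=\Pr_{\X^*}(x^k)$ handles both occurrences at once, so the argument is unaffected.
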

\begin{proof}
  Plug~\eqref{eq:alphak} into~\eqref{eq:distIPrxSq} for $x=x^*$ and
  rearrange terms to obtain (\ref{eq:lem1}). If the optimization
  problem (\ref{eq:NLO}) has a unique optimum $x^*$, then obviously
  $\lVert x^k-x^*\rVert=\d_{X^*}(x^k)$ for all $k$, so
  \eqref{eq:distXstar} is identical to~\eqref{eq:lem1}. Otherwise,
  note that since $X^*$ is the intersection of the closed set~$\X$
  with the level set $\{x \suchthat f(x) = f^*\}$ of the convex
  function $f$, $X^*$ is closed (cf., for example,
  \cite[Prop.~1.2.2,~1.2.6]{HUL04}) and the projection onto $X^*$ is
  well-defined. Then, considering $x^*=\Pr_{\X^*}(x^k)$,
  \eqref{eq:distIPrxSq} becomes
  \[
    \big\lVert x^{k+1}-\Pr_{\X^*}(x^{k})\big\rVert^2\leq\d_{X^*}(x^k)^2-2\alpha_k(f_k-f^*)+(\alpha_k\norm{h^k}+\varepsilon_k)^2+2\,\d_{X^*}(x^k)\,\varepsilon_k.
  \]
  Furthermore, because obviously $f(\Pr_{X^*}(x))=f(\Pr_{X^*}(y))=f^*$
  for all $x,y\in\R^n$, and by definition of the Euclidean projection,
  \[
    \d_{X^*}(x^{k+1})^2~=~\big\lVert
    x^{k+1}-\Pr_{\X^*}(x^{k+1})\big\rVert^2~\leq~ \big\lVert x^{k+1}-\Pr_{\X^*}(x^{k})\big\rVert^2.
  \]
  Combining the last two inequalities yields~\eqref{eq:distXstar}. 

  Moreover, note that these results continue to hold true if $x^{k+1}$
  is replaced in a projection refinement phase (starting in the next
  iteration $k+1$), since then only accuracy parameters smaller than
  $\varepsilon_k$ are used.\qed
\end{proof}

Typical convergence results are often derived by showing that the
sequence $(\norm{x^k-x^*})$ is monotonically decreasing (for arbitrary
$x^*\in\X^*$) under certain assumptions on the step sizes,
subgradients, etc. This is also done in~\cite{AHKS87},
where~(\ref{eq:lem1}) with $\varepsilon_k=0$ for all $k$ is the
central inequality, cf.~\cite[Prop.~2]{AHKS87}.  In our case, i.e.,
working with adaptive approximate projections as specified by
(\ref{eq:IPr}), we can follow this principle to derive conditions on
the projection accuracies $(\varepsilon_k)$ which still allow for a
(monotonic) decrease of the distances from the optimal set: If the
last summand in (\ref{eq:lem1}) is negative, the resulting gap between
the distances from $\X^*$ of subsequent iterates can be exploited to
relax the projection accuracy, i.e., to choose $\varepsilon_k>0$
without destroying monotonicity.

Naturally, to achieve feasibility (at least in the limit), we will
need to have~$(\varepsilon_k)$ diminishing ($\varepsilon_k\to 0$ as
$k\to\infty$).  It will become clear that this, combined with
summability ($\sum_{k=0}^\infty \varepsilon_k < \infty$) and with
monotonicity conditions as described above, is already enough to
extend the analysis to cover iterations with $f_k < f^*$, which may
occur since we project inaccurately.

For different choices of the estimate $\varphi$ of $f^*$, we will now
derive the proofs of Theorems~\ref{thm:ISA_conv_over}
and~\ref{thm:ISA_conv_under} via a series of intermediate
results. Corresponding results for exact projections
($\varepsilon_k=0$) can be found in \cite{AHKS87}. In fact, our
analysis for adaptive approximate projections improves on some of
these earlier results (e.g., \cite[Prop. 10]{AHKS87} states
convergence of some \emph{sub}sequence of the function values to the
optimum for the case $\varphi<f^*$, whereas
Theorem~\ref{thm:ISA_conv_under} in this paper gives convergence of
the whole sequence $(f_k)$, for approximate and also for exact
projections).

For the remainder of this section we can assume that ISA does not
terminate in Step~\ref{step:terminate_opt} and that all inner
projection accuracy refinement loops are finite. Otherwise, there is
some refinement phase starting at iteration $\underline{k}$ such that,
as $\ell\to\infty$, $x^{\underline{k}}$ is repeatedly reset to
\[
y_{\underline{k}}^\ell\define\Pr_\X^{\gamma^\ell
  \varepsilon_{\underline{k}-1}}(x^{\underline{k}-1}-\alpha_{\underline{k}-1}h^{\underline{k}-1})\to\Pr_X^0(x^{\underline{k}-1}-\alpha_{\underline{k}-1}h^{\underline{k}-1})\in\X,
\]
with $f(y_{\underline{k}}^\ell)\to\underline{\varphi}\leq\varphi$;
cf. Remarks~\ref{rem:ISA_conv_over} and \ref{rem:ISA_conv_under}.
  
\subsubsection{Using overestimates of the optimal value.}

In this part we will focus on the case $\varphi\geq f^*$. As might be
expected, this relation allows for eliminating the unknown $f^*$
from~\eqref{eq:distXstar}.
\begin{lemma}\label{lem:2}
  Let $\varphi\geq f^*$ and $\lambda_k \geq 0$. If $f_k\geq\varphi$ for
  some $k\in\N$, then
  \begin{align}
    \nonumber\d_{\X^*}(x^{k+1})^2~\leq~&\d_{\X^*}(x^k)^2+\varepsilon_k^2+2\left(\frac{\lambda_k(f_k-\varphi)}{\norm{h^k}}+\d_{\X^*}(x^k)\right)\varepsilon_k\\
    \label{eq:lem2}&+\frac{\lambda_k(\lambda_k-2)(f_k-\varphi)^2}{\norm{h^k}^2}.
  \end{align}
\end{lemma}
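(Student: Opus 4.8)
The plan is to obtain~\eqref{eq:lem2} directly from inequality~\eqref{eq:distXstar} of Lemma~\ref{lem:1} by specializing the step size and using the hypotheses $\varphi\geq f^*$ and $f_k\geq\varphi$ to eliminate the unknown $f^*$. First I would record that, since $\lambda_k\geq 0$ and $f_k\geq\varphi$, the step size $\alpha_k=\lambda_k(f_k-\varphi)/\norm{h^k}^2$ is nonnegative. Combined with $f_k-f^*\geq f_k-\varphi\geq 0$ (which uses $f^*\leq\varphi$), this gives $-2\,\alpha_k(f_k-f^*)\leq -2\,\alpha_k(f_k-\varphi)$, so~\eqref{eq:distXstar} weakens to
\[
\d_{\X^*}(x^{k+1})^2\leq \d_{\X^*}(x^k)^2-2\,\alpha_k(f_k-\varphi)+(\alpha_k\norm{h^k}+\varepsilon_k)^2+2\,\d_{\X^*}(x^k)\,\varepsilon_k.
\]

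Next I would expand the square, $(\alpha_k\norm{h^k}+\varepsilon_k)^2=\alpha_k^2\norm{h^k}^2+2\,\alpha_k\norm{h^k}\,\varepsilon_k+\varepsilon_k^2$, and substitute $\alpha_k=\lambda_k(f_k-\varphi)/\norm{h^k}^2$ term by term. The two terms not involving $\varepsilon_k$ combine to
\[
-2\,\alpha_k(f_k-\varphi)+\alpha_k^2\norm{h^k}^2=\frac{(\lambda_k^2-2\lambda_k)(f_k-\varphi)^2}{\norm{h^k}^2}=\frac{\lambda_k(\lambda_k-2)(f_k-\varphi)^2}{\norm{h^k}^2},
\]
while the cross term becomes $2\,\alpha_k\norm{h^k}\,\varepsilon_k=2\,\lambda_k(f_k-\varphi)\,\varepsilon_k/\norm{h^k}$. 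Collecting the remaining $\varepsilon_k$-dependent contributions as $\varepsilon_k^2+2\bigl(\lambda_k(f_k-\varphi)/\norm{h^k}+\d_{\X^*}(x^k)\bigr)\varepsilon_k$ then reproduces~\eqref{eq:lem2} exactly.

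There is no real obstacle here; this is essentially a substitution and a regrouping of terms. The only point worth stating carefully is that the two sign hypotheses are used precisely for two purposes: $\lambda_k\geq 0$ together with $f_k\geq\varphi$ guarantees $\alpha_k\geq 0$, which is what makes the replacement of $f_k-f^*$ by the smaller quantity $f_k-\varphi$ a valid relaxation of the bound, and it is the same sign condition that later lets one treat $\lambda_k(\lambda_k-2)(f_k-\varphi)^2/\norm{h^k}^2$ as the useful (negative, when $0<\lambda_k<2$) term that will be exploited in the subsequent convergence arguments. Everything else is routine algebra.
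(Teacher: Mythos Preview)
Your proposal is correct and follows exactly the approach the paper intends: it appeals to Lemma~\ref{lem:1} (specifically~\eqref{eq:distXstar}), uses $f_k\geq\varphi\geq f^*$ together with $\lambda_k\geq 0$ to replace $f_k-f^*$ by $f_k-\varphi$, and then just substitutes $\alpha_k=\lambda_k(f_k-\varphi)/\norm{h^k}^2$ and regroups terms. The paper's own proof is the one-line remark that the result is immediate from Lemma~\ref{lem:1} under these hypotheses, so you have simply written out the algebra that the paper leaves implicit.
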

\begin{proof}
  This follows immediately from Lemma~\ref{lem:1}, using
  $f_k\geq\varphi\geq f^*$ and $\lambda_k\geq 0$.\qed
\end{proof}

Note that ISA guarantees $f_k > \varphi$ by sufficiently accurate
projection (otherwise the method stops or the inner refinement loop
over $\ell$, with fixed $k$, is infinite, indicating $\varphi$ was too
large, see Steps~\ref{step:subgradient}-\ref{step:refine_proj} of
Algorithm~\ref{alg:DynamicISA}), and that the last summand
in~\eqref{eq:lem2} is always negative for $0 < \lambda_k < 2$. Hence,
adaptive approximate projections ($\varepsilon_k> 0$) can always be
employed without destroying the monotonic decrease of
$(\d_{\X^*}(x^k))$, as long as the $\varepsilon_k$ are chosen small
enough.

The following result provides a theoretical bound on how large the
projection accuracies $\varepsilon_k$ may become.
\begin{lemma}\label{lem:3}
  Let $0<\lambda_k<2$ for all $k$. For $\varphi\geq f^*$, the sequence
  $(\d_{X^*}(x^k))$ is monotonically decreasing and converges to some
  $\zeta\geq 0$, if $0 \leq \varepsilon_k \leq \overline{\varepsilon}_k$
  for all $k$, where $\overline{\varepsilon}_k$ is defined
  in~\eqref{eq:epsISAover} of Theorem~\ref{thm:ISA_conv_over}.
\end{lemma}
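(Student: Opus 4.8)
The plan is to start from inequality~\eqref{eq:lem2} of Lemma~\ref{lem:2} and read it as a quadratic expression in the quantity $\varepsilon_k$. Abbreviating $c_k \define \lambda_k(f_k-\varphi)/\norm{h^k} + \d_{X^*}(x^k) \geq 0$ and $b_k \define \lambda_k(2-\lambda_k)(f_k-\varphi)^2/\norm{h^k}^2 > 0$ (the latter positivity follows from $0<\lambda_k<2$ and from $f_k > \varphi$, which the ISA enforces by Steps~\ref{step:InfBelowTarget}--\ref{step:terminate}, as noted after Lemma~\ref{lem:2}), inequality~\eqref{eq:lem2} becomes
\[
  \d_{X^*}(x^{k+1})^2 \leq \d_{X^*}(x^k)^2 + \varepsilon_k^2 + 2\,c_k\,\varepsilon_k - b_k .
\]
So the monotonicity $\d_{X^*}(x^{k+1})^2 \leq \d_{X^*}(x^k)^2$ holds precisely when $\varepsilon_k^2 + 2 c_k \varepsilon_k - b_k \leq 0$, i.e.\ when $\varepsilon_k$ lies between the two roots of the quadratic $t \mapsto t^2 + 2c_k t - b_k$. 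The larger root is exactly $-c_k + \sqrt{c_k^2 + b_k}$, which is $\overline{\varepsilon}_k$ as defined in~\eqref{eq:epsISAover}; the smaller root $-c_k - \sqrt{c_k^2+b_k}$ is negative. Hence for any $\varepsilon_k$ with $0 \leq \varepsilon_k \leq \overline{\varepsilon}_k$ the quadratic is nonpositive and $\d_{X^*}(x^{k+1}) \leq \d_{X^*}(x^k)$ follows. I should also check $\overline{\varepsilon}_k \geq 0$ (so the admissible interval is nonempty), which is immediate since $b_k > 0$ gives $\sqrt{c_k^2+b_k} > c_k \geq 0$.

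Having established that $(\d_{X^*}(x^k))$ is a monotonically nonincreasing sequence, it is bounded below by $0$, so by the monotone convergence theorem it converges to some limit $\zeta \geq 0$. That completes the argument.

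The routine part is the algebra identifying $\overline{\varepsilon}_k$ with the positive root; the only genuine point requiring care is the justification that $f_k > \varphi$ (hence $b_k>0$ and the whole setup is non-degenerate) for every iterate the ISA produces — this is exactly what Steps~\ref{step:InfBelowTarget}--\ref{step:terminate} of Algorithm~\ref{alg:DynamicISA} guarantee, since the algorithm would have stopped otherwise. I expect no real obstacle here; this lemma is essentially a reformulation of Lemma~\ref{lem:2}, and its role is to set up the quantitative bound $\overline{\varepsilon}_k$ that feeds into the proof of Theorem~\ref{thm:ISA_conv_over}.
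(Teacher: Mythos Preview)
Your proposal is correct and follows essentially the same approach as the paper: both read inequality~\eqref{eq:lem2} as a quadratic in~$\varepsilon_k$, identify $\overline{\varepsilon}_k$ as its (unique) positive root, and then invoke monotone convergence for the nonincreasing, nonnegative sequence $(\d_{X^*}(x^k))$. Your version is slightly more explicit (naming both roots and verifying $\overline{\varepsilon}_k>0$), but the underlying argument is identical.
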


\begin{proof}
  Considering (\ref{eq:lem2}), it suffices to show that for $\varepsilon_k
  \leq \overline{\varepsilon}_k$, we have
  \begin{equation}\label{eq:lem3proof}
    \varepsilon_k^2+2\left(\frac{\lambda_k(f_k-\varphi)}{\norm{h^k}}+\d_{X^*}(x^k)\right)\varepsilon_k+\frac{\lambda_k(\lambda_k-2)(f_k-\varphi)^2}{\norm{h^k}^2}\leq 0.
  \end{equation}
  The bound $\overline{\varepsilon}_k$ from~\eqref{eq:epsISAover} is
  precisely the (unique) positive root of the quadratic function in
  $\varepsilon_k$ given by the left hand side of~\eqref{eq:lem3proof}.
  Thus, we have a monotonically decreasing (i.e., nonincreasing)
  sequence $(\d_{X^*}(x^k))$, and since its members are bounded below
  by zero, it converges to some nonnegative value, say $\zeta$.\qed
\end{proof}

As a consequence, if $X^*$ is bounded, we obtain boundedness of the
iterate sequence $(x^k)$:
\begin{corollary}\label{cor:1}
  Let $X^*$ be bounded. If the sequence $(\d_{\X^*}(x^k))$ is
  monotonically decreasing, then the sequence $(x^k)$ is bounded.
\end{corollary}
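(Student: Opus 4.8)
The plan is to exploit the monotonicity hypothesis to bound the distance of every iterate to the fixed bounded set $X^*$ by the initial distance $\d_{\X^*}(x^0)$, and then to combine this with a uniform bound on the norms of points in $X^*$. First I would observe that, since $(\d_{\X^*}(x^k))$ is nonincreasing, one has $\d_{\X^*}(x^k) \leq \d_{\X^*}(x^0) =: D_0 < \infty$ for all $k$. Because $X^*$ is closed and convex (as noted in the proof of Lemma~\ref{lem:1}), the Euclidean projection $\Pr_{\X^*}(x^k)$ is well-defined and $\norm{x^k - \Pr_{\X^*}(x^k)} = \d_{\X^*}(x^k) \leq D_0$ for every $k$.

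Second, since $X^*$ is bounded, there is a constant $\rho \geq 0$ with $\norm{y} \leq \rho$ for all $y \in X^*$; in particular $\norm{\Pr_{\X^*}(x^k)} \leq \rho$. The triangle inequality then yields
$\norm{x^k} \leq \norm{x^k - \Pr_{\X^*}(x^k)} + \norm{\Pr_{\X^*}(x^k)} \leq D_0 + \rho$
for all $k$, so the whole sequence $(x^k)$ is contained in the ball of radius $D_0 + \rho$ about the origin and is therefore bounded.

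I do not expect any real obstacle here; the only point needing a line of justification is that $\d_{\X^*}(x^k)$ is attained, which follows from closedness of $X^*$. If one prefers to avoid invoking attainment, the same argument goes through verbatim after replacing $\Pr_{\X^*}(x^k)$ by an almost-minimizer $y^k \in X^*$ with $\norm{x^k - y^k} \leq D_0 + 1$, which exists directly from the definition~\eqref{eq:dist} of the distance. Everything else is a single triangle-inequality estimate, so the corollary is essentially immediate once monotonicity has been established (Lemma~\ref{lem:3}).
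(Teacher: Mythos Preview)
Your argument is correct and matches the paper's own proof essentially line for line: both use monotonicity to bound $\d_{\X^*}(x^k)$ by $\d_{\X^*}(x^0)$, then apply the triangle inequality via $\Pr_{\X^*}(x^k)$ together with boundedness of $X^*$. Your extra remark on closedness of $X^*$ and the almost-minimizer alternative are harmless embellishments, not departures from the paper's route.
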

\begin{proof}
  By monotonicity of $(\d_{X^*}(x^k))$, making use of the triangle
  inequality,
  \begin{align*}
    \lVert x^k\rVert~&=~ \big\lVert x^k-\Pr_{X^*}(x^k)+\Pr_{X^*}(x^k)\big\rVert\\
    &\leq~\d_{X^*}(x^k) + \big\lVert \Pr_{X^*}(x^k)\big\rVert~\leq~\d_{X^*}(x^0)+\sup_{x\in\X^*}\lVert x\rVert~<~\infty,
  \end{align*}
  since $X^*$ is bounded by assumption.\qed
\end{proof}

We now have all the tools at hand for proving
Theorem~\ref{thm:ISA_conv_over}.\\

\noindent
\textbf{Proof of Theorem \ref{thm:ISA_conv_over}.}  First, we prove
part (i). Let the main assumptions of Theorem~\ref{thm:ISA_conv_over}
hold and suppose---contrary to the desired result (i)---that
$f_k>\varphi+\delta$ for all $k$ (possibly after finitely many
refinements of the projection accuracy used to compute $x^k$). By
Lemma~\ref{lem:2},
   \begin{align*}
     \frac{\lambda_k(2-\lambda_k)(f_k-\varphi)^2}{\norm{h^k}^2}~\leq~~&\d_{X^*}(x^k)^2-\d_{X^*}(x^{k+1})^2\\
     &+\varepsilon_k^2+2\left(\frac{\lambda_k(f_k-\varphi)}{\norm{h^k}}+\d_{X^*}(x^k)\right)\varepsilon_k.
   \end{align*}
   Since $0 < \underline{H} \leq \norm{h^k} \leq \overline{H}<\infty$, $0 < \lambda_k
   \leq \beta < 2$, and $f_k-\varphi>\delta$ for all $k$ by
   assumption, we have
   \[
   \frac{\lambda_k(2-\lambda_k)(f_k-\varphi)^2}{\norm{h^k}^2}~\geq~\frac{\lambda_k(2-\beta)\delta^2}{\overline{H}^2}.
   \]
   By Lemma~\ref{lem:3}, $\d_{X^*}(x^k)\leq\d_{X^*}(x^0)$. Also, by
   Corollary~\ref{cor:1} there exists $F < \infty$ such that $f_k\leq
   F$ for all $k$. Hence,
   $\lambda_k(f_k-\varphi)\leq\beta(F-\varphi)$, and since
   $1/\norm{h^k}\leq 1/\underline{H}$, we obtain
   \begin{equation}\label{eq:thm1proof}
     \frac{(2-\beta)\delta^2}{\overline{H}^2}\lambda_k\leq\d_{X^*}(x^k)^2-\d_{X^*}(x^{k+1})^2+\varepsilon_k^2+2\left(\frac{\beta(F-\varphi)}{\underline{H}}+\d_{X^*}(x^0)\right)\varepsilon_k.
   \end{equation}
   Summation of the inequalities~\eqref{eq:thm1proof} for
   $k=0,1,\dots,m$ yields
   \begin{align*}
     \frac{(2-\beta)\delta^2}{\overline{H}^2}\sum_{k=0}^{m}\lambda_k \leq~ & \d_{X^*}(x^0)^2-\d_{X^*}(x^{m+1})^2\\
     &+\sum_{k=0}^{m}\varepsilon_k^2+2\left(\frac{\beta(F-\varphi)}{\underline{H}}+\d_{X^*}(x^0)\right)\sum_{k=0}^{m}\varepsilon_k.
   \end{align*}
   Now, by assumption, the left hand side tends to infinity as $m \to
   \infty$, while the right hand side remains finite (note that
   nonnegativity and summability of $(\nu_k)$ imply the summability of
   $(\nu_k^2)$, properties that carry over to
   $(\varepsilon_k)$). Thus, we have reached a contradiction and
   therefore proven part (i) of Theorem \ref{thm:ISA_conv_over}, i.e.,
   that $f_K\leq\varphi+\delta$ holds in some iteration $K$.

   We now turn to part (ii): Let the main assumptions of Theorem
   \ref{thm:ISA_conv_over} hold, let $\lambda_k\to 0$ and suppose
   $f_k>\varphi$ for all $k$ (again, possibly after
   refinements). Then, since we know from part (i) that the function
   values fall below every $\varphi+\delta$, we can construct a
   monotonically decreasing subsequence $(f_{K_j})$ such that
   $f_{K_j}\to \varphi$. (To see this, note that if
   $f_k<\varphi+\delta$ is reached with $f_k<\varphi$, the ensuing
   refinement phase not necessarily ends with $x^k$ replaced by a
   point with $\varphi<f_k<\varphi+\delta$, but that then, however,
   there always exists a $K>k$ such that $\varphi<f_K<\varphi+\delta$,
   since $\lambda_k\to 0$, $\varepsilon_k\to 0$, and by continuity of
   $f$.)

   To show that $\varphi$ is the unique accumulation point of $(f_k)$,
   assume to the contrary that there is another subsequence of $(f_k)$
   which converges to $\varphi +\eta$, with some $\eta>0$. We can now
   employ the same technique as in the proof of Theorem
   \ref{thm:ISA_conv_apriori} to reach a contradiction:

   The two cases $f_k<\varphi+\tfrac{1}{3}\eta$ and
   $f_k>\varphi+\tfrac{2}{3}\eta$ must both occur infinitely often,
   since $\varphi$ and $\varphi+\eta$ are accumulation points. Set
   $n_{(-1)}\define-1$ and define, for $\ell\geq 0$,
   \begin{align*}
     m_\ell &\define \min\{\,k\suchthat k>n_{\ell-1},\,f_k>\varphi+\tfrac{2}{3}\eta\,\},\\
     n_\ell &\define \min\{\,k\suchthat k>m_{\ell},\,f_k<\varphi+\tfrac{1}{3}\eta\,\}.
   \end{align*}
   Then, with $\infty>F\geq f_k$ for all $k$ (existing since $(x^k)$
   is bounded and therefore so is $(f_k)$) and the subgradient norm
   bounds, we obtain
   \[
   \tfrac{1}{3}\eta < f_{m_{\ell}}-f_{n_{\ell}} \leq \overline{H}\norm{x^{m_{\ell}}-x^{n_{\ell}}} \leq \frac{\overline{H}(F-\varphi)}{\underline{H}}\sum_{j=m_{\ell}}^{n_{\ell}-1}\lambda_j +\overline{H}\sum_{j=m_{\ell}}^{n_{\ell}-1}\varepsilon_j
   \]
   and from this, denoting $\ell_m\define\max\{\,\ell\suchthat n_{\ell}-1\leq m\,\}$ for a given $m$,
   \[
   \tfrac{1}{3}\sum_{\ell=0}^{\ell_{m}}\eta \leq \frac{\overline{H}(F-\varphi)}{\underline{H}}\sum_{\ell=0}^{\ell_{m}}\sum_{j=m_{\ell}}^{n_{\ell}-1}\lambda_j +\overline{H}\sum_{\ell=0}^{\ell_{m}}\sum_{j=m_{\ell}}^{n_{\ell}-1}\varepsilon_j.
   \]
   Since for $m\to\infty$, the left hand side tends to infinity, the
   same must hold for the right hand side. But since
   $\sum_{\ell=0}^{\ell_{m}}\sum_{j=m_\ell}^{n_\ell-1}\varepsilon_j\leq
   \sum_{k=0}^{m}\varepsilon_k\leq \sum_{k=0}^{m}\nu_k<\infty$, this
   implies
   \begin{equation}\label{eq:proof_2ii_divser}
     \sum_{\ell=0}^{\ell_{m}}\sum_{j=m_{\ell}}^{n_{\ell}-1}\lambda_j\to\infty
     \qquad\text{for }m\to\infty.
   \end{equation}
   
   Also, using the same estimates as in part (i) above,
   (\ref{eq:lem2}) yields
   \begin{equation}\nonumber
     \underbrace{\tfrac{2-\beta}{\overline{H}}}_{\eqqcolon C_1<\infty}(f_k-\varphi)^2\lambda_k \leq d_{X^*}(x^k)^2-d_{X^*}(x^{k+1})^2+\varepsilon_k^2+\underbrace{2\left(\tfrac{\beta(F-\varphi)}{\underline{H}}+d_{X^*}(x^0)\right)}_{\eqqcolon C_2<\infty}\varepsilon_k
   \end{equation}
   and thus by summation for $k=0,\dots,m$ for a given $m$,
   \begin{equation}\label{eq:proof_2ii}
     C_1 \sum_{k=0}^{m}(f_k-\varphi)^2\lambda_k \leq d_{X^*}(x^0)^2 -d_{X^*}(x^{m+1})^2 +\sum_{k=0}^{m}\varepsilon_k^2+C_2\sum_{k=0}^{m}\varepsilon_k.
   \end{equation}
   Observe that all summands of the left hand side term are positive, and thus
   \[
     C_1 \sum_{k=0}^{m}(f_k-\varphi)^2\lambda_k \geq C_1 \sum_{\ell=0}^{{\ell}_ {m}}\sum_{j=m_{\ell}}^{n_{\ell}-1}(\underbrace{f_j-\varphi}_{>\tfrac{1}{3}\eta})^2\lambda_j > \frac{C_1 \eta^2}{9} \sum_{\ell=0}^{{\ell}_ {m}}\sum_{j=m_{\ell}}^{n_{\ell}-1}\lambda_j.
   \]
   Therefore, as $m\to\infty$, the left hand side of
   (\ref{eq:proof_2ii}) tends to infinity (by
   (\ref{eq:proof_2ii_divser}) and the above inequality) while the
   right hand side expression remains finite (recall
   $0\leq\varepsilon_k\leq\nu_k$ with $(\nu_k)$ summable and thus also
   square-summable). Thus, we have reached a contradiction, and it
   follows that $\varphi$ is the only accumulation point (i.e., the
   limit) of the whole sequence $(f_k)$.

   This proves part (ii) and thus completes the proof of Theorem \ref{thm:ISA_conv_over}.\qed

\begin{remark}\label{rem:conv_x_overest}
  With more technical effort one can argue along the lines of the proof of
  Theorem~\ref{thm:ISA_conv_apriori} to obtain the following result on the
  convergence of the iterates~$x^k$ in the case of
  Theorem~\ref{rem:ISA_conv_over}: If we additionally assume that $\sum
  \lambda_k^2<\infty$ and that $\lambda_k\geq \sum_{j=k}^\infty \varepsilon_k$
  for all $k$, then $x^k \to \overline{x}$ for some $\overline{x} \in X$
  with $f(\overline{x})=\varphi$ and $d_{X^*}(\overline{x}) = \zeta\geq 0$ ($\zeta$
  being the same as in Lemma~\ref{lem:3}).
\end{remark}

\subsubsection{Using lower bounds on the optimal value.}

In the following, we focus on the case $\varphi<f^*$, i.e., using a
constant lower bound in the step size definition
(\ref{eq:alphak}). Such a lower bound is often more readily available
than (useful) upper bounds; for instance, it can be computed via the
dual problem, or sometimes derived directly from properties of the
objective function such as, e.g., nonnegativity of the function
values.

Following arguments similar to those in the previous part, we can
prove convergence of ISA (under certain assumptions), provided
that the projection accuracies $(\varepsilon_k)$ obey conditions
analogous to those for the case $\varphi\geq f^*$. Moreover, recall
that for $\varphi<f^*$, every refinement phase is finite, so that
$f_k>\varphi$ is guaranteed for all $k$; in particular,
Step~\ref{step:terminate_opt} is never executed since $X\cap\{x \suchthat
f(x)<\varphi\}=\emptyset$.

Let us start with analogues of Lemmas~\ref{lem:2} and~\ref{lem:3}.

\begin{lemma}\label{lem:4}
  Let $\varphi<f^*$ and $0 < \lambda_k \leq \beta < 2$. If $f_k \geq
  \varphi$ for some $k \in \N$, then
  \begin{equation}\label{eq:lem4}
    \d_{X^*}(x^{k+1})^2~\leq~\d_{X^*}(x^k)^2+\varepsilon_k^2+2\left(\frac{\lambda_k(f_k-\varphi)}{\norm{h^k}}+\d_{X^*}(x^k)\right)\varepsilon_k+L_k,
  \end{equation}
  where $L_k$ is defined in~\eqref{eq:LkDef} of
  Theorem~\ref{thm:ISA_conv_under}.
\end{lemma}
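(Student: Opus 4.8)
The plan is to mimic the derivation of Lemma~\ref{lem:2}, but now exploit the hypothesis $\varphi < f^*$ to eliminate the unknown optimal value $f^*$ from the last summand of~\eqref{eq:distXstar} in a way that produces exactly the quantity $L_k$ from~\eqref{eq:LkDef}. First I would start from inequality~\eqref{eq:distXstar} of Lemma~\ref{lem:1}, valid for any $x^* \in \X^*$, and substitute the dynamic step size $\alpha_k = \lambda_k(f_k-\varphi)/\norm{h^k}^2$. This turns the term $-2\,\alpha_k(f_k-f^*) + (\alpha_k\norm{h^k}+\varepsilon_k)^2$ into
\[
\frac{\lambda_k(f_k-\varphi)}{\norm{h^k}^2}\bigl(\lambda_k(f_k-\varphi) - 2(f_k-f^*)\bigr) + \varepsilon_k^2 + 2\,\frac{\lambda_k(f_k-\varphi)}{\norm{h^k}}\,\varepsilon_k,
\]
so that the first summand above is the only place where $f^*$ still appears.

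The key algebraic step is to rewrite that first summand. Writing $g_k \define \lambda_k(f_k-\varphi)/\norm{h^k}^2 > 0$ (using $f_k \geq \varphi$ and $\lambda_k > 0$), the bracket is $\lambda_k(f_k-\varphi) - 2(f_k - f^*)$. The trick is to split $-2(f_k-f^*) = -2(f_k-\varphi) + 2(f^*-\varphi)$ and then bound $-2(f_k-\varphi)$ by $-\beta(f_k-\varphi)$ is the \emph{wrong} direction, so instead one massages the expression to factor out $(2-\beta)$: I would verify that
\[
\lambda_k(f_k-\varphi) - 2(f_k-f^*) \le (2-\beta)\Bigl(f^* - f_k + \tfrac{\beta}{2-\beta}(f^*-\varphi)\Bigr),
\]
which, after multiplication by $g_k = \lambda_k(f_k-\varphi)/\norm{h^k}^2$, gives precisely $L_k$ as in~\eqref{eq:LkDef}. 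To check this inequality, rearrange it to $\lambda_k(f_k-\varphi) - 2(f_k-f^*) \le (2-\beta)(f^*-f_k) + \beta(f^*-\varphi)$; bringing everything to one side yields $(\lambda_k - \beta)(f_k - \varphi) \le 0$, which holds because $\lambda_k \le \beta$ and $f_k - \varphi \ge 0$. Substituting this bound for the first summand into the expression above yields exactly~\eqref{eq:lem4}.

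The main obstacle is purely bookkeeping: getting the split of $-2(f_k-f^*)$ and the factor $(2-\beta)$ lined up so that the residual inequality collapses to the manifestly-true $(\lambda_k-\beta)(f_k-\varphi)\le 0$, and making sure the sign conventions ($f_k\ge\varphi$, $\lambda_k>0$, so $g_k\ge 0$) are used in the right places — in particular that multiplying the scalar inequality by the nonnegative factor $g_k$ preserves its direction. No analytic subtlety is involved; the only thing to be careful about is that, unlike in Lemma~\ref{lem:2}, the quantity $L_k$ need not be negative (indeed $f^*-f_k$ can be positive when $x^k$ is infeasible with $f_k < f^*$), which is why the later results use $\abs{\tilde\varepsilon_k}$ rather than $\tilde\varepsilon_k$; but that issue belongs to Lemma~\ref{lem:3}'s analogue, not to this lemma, whose statement is simply the inequality~\eqref{eq:lem4} itself.
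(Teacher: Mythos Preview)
Your proposal is correct and follows essentially the same route as the paper: start from Lemma~\ref{lem:1}, substitute the dynamic step size, and bound the bracket $\lambda_k(f_k-\varphi)-2(f_k-f^*)$ by $(2-\beta)(f^*-f_k)+\beta(f^*-\varphi)$, which (as you verified) is equivalent to $(\lambda_k-\beta)(f_k-\varphi)\le 0$. The paper simply writes this step as $\lambda_k(f_k-\varphi)\le\beta(f_k-\varphi)$ followed by the algebraic identity $\beta(f_k-\varphi)-2(f_k-f^*)=\beta(f^*-\varphi)+(2-\beta)(f^*-f_k)$, but the content is identical.
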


\begin{proof}
  For $\varphi<f^*$, $0<\lambda_k\leq\beta<2$, and $f_k\geq\varphi$,
  it holds that
  \[
  \lambda_k(f_k-\varphi)-2(f_k-f^*)~\leq\beta(f_k-\varphi)-2(f_k-f^*)=\beta(f^*-\varphi)+(2-\beta)(f^*-f_k).
  \]
  The claim now follows immediately from Lemma \ref{lem:1}.\qed
\end{proof}

\begin{lemma}\label{lem:5}
  Let $\varphi<f^*$, let $0<\lambda_k\leq\beta<2$ and $f_k\geq
  f^*+\tfrac{\beta}{2-\beta}(f^*-\varphi)$ for all~$k$, and let $L_k$
  be given by~\eqref{eq:LkDef}. Then $(\d_{X^*}(x^k))$ is
  monotonically decreasing and converges to some $\xi \geq 0$, if $0
  \leq \varepsilon_k \leq \tilde{\varepsilon}_k$ for all $k$, where
  $\tilde{\varepsilon}_k$ is defined in~\eqref{eq:epsISAunder}.
\end{lemma}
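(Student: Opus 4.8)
The plan is to follow the proof of Lemma~\ref{lem:3} essentially line for line, the one genuinely new ingredient being a sign check on $L_k$. Since $\varphi < f^*$ and $0 < \beta < 2$, the hypothesis $f_k \geq f^* + \tfrac{\beta}{2-\beta}(f^*-\varphi)$ already forces $f_k > f^* > \varphi$, so Lemma~\ref{lem:4} applies and we may start from~\eqref{eq:lem4}. Reading off that inequality, it suffices to show that
\[
\varepsilon_k^2 + 2\left(\frac{\lambda_k(f_k-\varphi)}{\norm{h^k}} + \d_{X^*}(x^k)\right)\varepsilon_k + L_k \leq 0
\qquad\text{whenever } 0 \leq \varepsilon_k \leq \tilde{\varepsilon}_k,
\]
because then~\eqref{eq:lem4} yields $\d_{X^*}(x^{k+1})^2 \leq \d_{X^*}(x^k)^2$.

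The first step is to prove $L_k \leq 0$. In~\eqref{eq:LkDef} the factor $\lambda_k(2-\beta)(f_k-\varphi)/\norm{h^k}^2$ is strictly positive ($\lambda_k > 0$, $\beta < 2$, $f_k > \varphi$, $h^k \neq 0$ as required for the dynamic step size), whereas the remaining bracket $f^* - f_k + \tfrac{\beta}{2-\beta}(f^*-\varphi)$ is $\leq 0$ precisely by the assumed lower bound on $f_k$; hence $L_k \leq 0$.

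With this in hand I would set $b_k \define \lambda_k(f_k-\varphi)/\norm{h^k} + \d_{X^*}(x^k) \geq 0$ and regard the left-hand side of the displayed inequality as the quadratic $q(\varepsilon) = \varepsilon^2 + 2 b_k \varepsilon + L_k$. Its discriminant equals $b_k^2 - L_k \geq b_k^2 \geq 0$, so $q$ has the real roots $-b_k \pm \sqrt{b_k^2 - L_k}$; the larger one is exactly the quantity $\tilde{\varepsilon}_k$ of~\eqref{eq:epsISAunder}, and it is nonnegative because $\sqrt{b_k^2 - L_k} \geq b_k$. Consequently $q(\varepsilon) \leq 0$ throughout the interval between the two roots, in particular for every $\varepsilon \in [0, \tilde{\varepsilon}_k]$. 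Feeding $\varepsilon_k$ back into~\eqref{eq:lem4} then gives the claimed monotonicity; since $(\d_{X^*}(x^k))$ is bounded below by $0$, it converges to some $\xi \geq 0$.

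I do not expect a real obstacle here. The only subtlety worth flagging is that it is the strengthened hypothesis $f_k \geq f^* + \tfrac{\beta}{2-\beta}(f^*-\varphi)$ --- rather than merely $f_k \geq \varphi$, as used in Lemma~\ref{lem:4} --- that drives $L_k \leq 0$, which in turn is precisely what makes $\tilde{\varepsilon}_k$ real and nonnegative, so that no absolute value is needed at this stage (in contrast to the use of $\abs{\tilde{\varepsilon}_k}$ in Theorem~\ref{thm:ISA_conv_under}, where $L_k > 0$ can occur).
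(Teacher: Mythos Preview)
Your proposal is correct and matches the paper's own proof essentially line for line: the paper observes that the hypothesis $f_k \geq f^* + \tfrac{\beta}{2-\beta}(f^*-\varphi)$ forces $L_k \leq 0$ and then simply declares the remainder ``completely analogous to that of Lemma~\ref{lem:3}''. You have merely unpacked that analogy explicitly---the quadratic argument and the identification of $\tilde{\varepsilon}_k$ as the nonnegative root---so there is nothing to add.
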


\begin{proof}
  The condition $f_k\geq f^*+\tfrac{\beta}{2-\beta}(f^*-\varphi)$
  implies $L_k \leq 0$ and hence ensures that adaptive approximate projection can
  be used while still allowing for a decrease in the distances of the
  subsequent iterates from $\X^*$. The rest of the proof is completely
  analogous to that of Lemma~\ref{lem:3}, considering~\eqref{eq:lem4}
  and~\eqref{eq:LkDef} to derive the upper bound
  $\tilde{\varepsilon}_k$ given by~\eqref{eq:epsISAunder} on the
  projection accuracy.\qed
\end{proof}

We can now state the proof of our convergence results for the case
$\varphi<f^*$.\\

\textbf{Proof of Theorem \ref{thm:ISA_conv_under}.} Let the
assumptions of Theorem \ref{thm:ISA_conv_under} hold. We start with
proving part (i): Let some $\delta>0$ be given and suppose---contrary
to the desired result (i)---that $f_k >
f^*+\tfrac{\beta}{2-\beta}(f^*-\varphi)+\delta$ for all $k$ (possibly
after refinements). By Lemma~\ref{lem:4},
   \[
   \d_{X^*}(x^{k+1})^2~\leq~\d_{X^*}(x^k)^2+\varepsilon_k^2+2\left(\frac{\lambda_k(f_k-\varphi)}{\norm{h^k}}+\d_{X^*}(x^k)\right)\varepsilon_k+L_k.
   \]
   Since $0 < \underline{H} \leq \norm{h^k} \leq \overline{H}$, $0 < \lambda_k \leq
   \beta < 2$, and $\varphi< f_k$, and due to our assumption on $f_k$,
   i.e.,
   \[
   f^*-f_k+\tfrac{\beta}{2-\beta}(f^*-\varphi)~<~-\delta\qquad\text{for all }k,
   \]
   it follows that
   \[
   L_k~<~-\,\frac{\lambda_k(2-\beta)(f_k-\varphi)\delta}{\overline{H}^2}~<~0.
   \]
   By Lemma~\ref{lem:5}, $\d_{X^*}(x^k)\leq\d_{X^*}(x^0)$, and
   Corollary~\ref{cor:1} again ensures existence of some $F<\infty$
   such that $f_k\leq F$ for all $k$.  Because also
   $\lambda_k(f_k-\varphi)\leq\beta(F-\varphi)$ and $1/\norm{h^k} \leq
   1/\underline{H}$, we hence obtain
   \begin{align}
     \nonumber \frac{\lambda_k(2-\beta)(f_k-\varphi)\delta}{\overline{H}^2}~<~-L_k
     ~\leq~ & \d_{X^*}(x^k)^2-\d_{X^*}(x^{k+1})^2\\
     & + \varepsilon_k^2+2\left(\frac{\beta(F-\varphi)}{\underline{H}}+\d_{X^*}(x^0)\right)\varepsilon_k.\label{eq:thm3_sum_lambda_est}
   \end{align}
   Summation of these inequalities for $k=0,1,\dots,m$ yields
   \begin{align}
     \nonumber \frac{(2-\beta)\delta}{\overline{H}^2}\sum_{k=0}^{m}(f_k-\varphi)\lambda_k ~<~ & \d_{X^*}(x^0)^2-\d_{X^*}(x^{m+1})^2\\
     & + \sum_{k=0}^{m}\varepsilon_k^2+2\left(\frac{\beta(F-\varphi)}{\underline{H}}+\d_{X^*}(x^0)\right)\sum_{k=0}^{m}\varepsilon_k.
     \label{eq:thm3_sum_lambda_more}
   \end{align}
   Moreover, our assumption on $f_k$ yields   
   \[
   f_k-\varphi~>~f^*+\tfrac{\beta}{2-\beta}f^*-\tfrac{\beta}{2-\beta}\varphi+\delta-\varphi~=~\tfrac{2}{2-\beta}(f^*-\varphi)+\delta.
   \]
   It follows from~\eqref{eq:thm3_sum_lambda_more} that
   \begin{align*}
     \frac{\big(2(f^*-\varphi)+(2-\beta)\delta\big)\delta}{\overline{H}^2}\sum_{k=0}^{m}\lambda_k <\; &\d_{X^*}(x^0)^2-\d_{X^*}(x^{m+1})^2\\
     & + \sum_{k=0}^{m}\varepsilon_k^2+2\left(\frac{\beta(F-\varphi)}{\underline{H}}+\d_{X^*}(x^0)\right)\sum_{k=0}^{m}\varepsilon_k.
   \end{align*}
   Now, by assumption, the left hand side tends to infinity as
   $m\to\infty$, whereas by Lemma~\ref{lem:5} and the choice of $0
   \leq \varepsilon_k \leq \min\{|\tilde{\varepsilon}_k|,\nu_k\}$ with
   a nonnegative summable (and hence also square-summable) sequence
   $(\nu_k)$, the right hand side remains finite. Thus, we have
   reached a contradiction, and part (i) is proven, i.e., there does
   exist some $K$ such that $f_K\leq
   f^*+\frac{\beta}{2-\beta}(f^*-\varphi)+\delta$ (after possible
   refinements of the projection accuracy used to recompute $x^K$).

   Let us now turn to part (ii): Again, let the main assumptions of
   Theorem~\ref{thm:ISA_conv_under} hold and let $\lambda_k\to 0$.
   Recall that for $\varphi<f^*$, we have $f_k>\varphi$ for all~$k$ by
   construction of ISA (refinement loops). We distinguish three
   cases:

   If $f_k<f^*$ holds for all $k\geq k_0$ for some $k_0$, then $f_k\to
   f^*$ is obtained immediately, just like in the proof of
   Theorem~\ref{thm:ISA_conv_apriori}, by asymptotic feasibility. 
   
   On the other hand, if $f_k\geq f^*$ for all $k$ larger than some
   $k_0$, then repeated application of part (i) yields a subsequence
   of $(f_k)$ which converges to $f^*$: For any $\delta > 0$ we can
   find an index $K$ such that $f^*\leq f_K \leq f^* +
   \tfrac{\beta}{2-\beta}(f^*-\varphi) + \delta$. Obviously, we get
   arbitrarily close to $f^*$ if we choose $\beta$ and $\delta$ small
   enough. However, we have the restriction $\lambda_k\leq \beta$. But
   since $\lambda_k\to 0$, we may ``restart'' our argumentation if
   $\lambda_k$ is small enough and replace~$\beta$ with a smaller
   one. With the convergent subsequence thus constructed, we can then
   use the same technique as in the proof of
   Theorem~\ref{thm:ISA_conv_over} (ii) to show that $(f_k)$ has no
   other accumulation point but $f^*$, whence $f_k\to f^*$
   follows. 

   Finally, when both cases $f_k<f^*$ and $f_k\geq f^*$ occur
   infinitely often, we can proceed similar to the proof of
   Theorem~\ref{thm:ISA_conv_apriori}: The subsequence of function
   values below $f^*$ converges to $f^*$, since $\varepsilon_k\to 0$.
   For the function values greater or equal to $f^*$, we assume that
   there is an accumulation point $f^*+\eta$ larger than $f^*$, deduce
   that an appropriate sub-sum of the $\lambda_k$'s diverge and then
   sum up equation \eqref{eq:thm3_sum_lambda_est} for the respective
   indices (belonging to $\{k\suchthat f_k\geq f^*\}$) to arrive at a
   contradiction. Note that the iterate sequence $(x^k)$ is bounded,
   due to Corollary~\ref{cor:1} (for iterations $k$ with $f_k\geq
   f^*$) and since the iterates with $\varphi<f_k<f^*$ stay within a
   bounded neighborhood of the bounded set $X^*$, since
   $\varepsilon_k$ tends to zero and is summable. Therefore, as $f$ is
   continuous on a neighborhood of $\X$ (which contains all~$x^k$
   from some $k$ on), $(f_k)$ is
   bounded as well and therefore must have at least one accumulation
   point. The only possibility left now is $f^*$, so we conclude
   $f_k\to f^*$.\qed
   
\begin{remark}
  With $f_k\to f^*$ and $\varepsilon_k\to 0$, we obviously have
  $d_{X^*}(x^k)\to 0$ in the setting of
  Theorem~\ref{thm:ISA_conv_under}. Furthermore,
  Remark~\ref{rem:conv_x_overest} applies similarly: With more
  conditions on $\lambda_k$ and more technical effort one can obtain
  convergence of the sequence $(x^k)$ to some $\overline{x}\in X^*$.
\end{remark}

\section{Discussion}

In this section, we will discuss extensions of ISA. We will also
illustrate how to obtain bounds on the projection accuracies that are
independent of the (generally unknown) distance from the optimal set,
and thus computable.

\subsection{Extension to $\epsilon$-subgradients}

It is noteworthy that the above convergence analyses also work when
replacing the subgradients by $\epsilon$-subgradients \cite{BM73},
i.e., replacing $\partial f(x^k)$ by
\begin{equation}\label{eq:epssubdiff}
  \partial_{\rho_k} f(x^k) \define \{\,h\in\R^n \suchthat f(x)-f(x^k) \geq
  h^\top (x-x^k) - \rho_k \quad \forall\, x\in\R^n\,\}.
\end{equation}
(To avoid confusion with the projection accuracy parameters
$\varepsilon_k$, we use $\rho_k$.) For instance, we immediately
obtain the following result:
\begin{corollary}\label{cor:epssubgradapriori}
  Let ISA (Algorithm \ref{alg:APrioriISA}) choose
  $h^k\in\partial_{\rho_k}f(x^k)$ with $\rho_k\geq 0$ for all
  $k$. Under the assumptions of Theorem \ref{thm:ISA_conv_apriori}, if
  $(\rho_k)$ is chosen summable $(\sum_{k=0}^\infty
  \rho_k<\infty)$ and such that
  \begin{enumerate}
  \item[\emph{~(i)}] $\rho_k\leq\mu\,\alpha_k$ for some $\mu>0$, or
  \item[\emph{(ii)}] $\rho_k\leq\mu\,\varepsilon_k$ for some $\mu>0$,
  \end{enumerate}
  then the sequence of ISA iterates $(x^k)$ converges to an optimal
  point.
\end{corollary}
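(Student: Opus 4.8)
The plan is to re-run the proof of Theorem~\ref{thm:ISA_conv_apriori} essentially verbatim, tracking the single structural change that passing from $\partial f(x^k)$ to $\partial_{\gamma_k} f(x^k)$ introduces into the master estimate. Since $h^k\in\partial_{\gamma_k} f(x^k)$, definition~(\ref{eq:epssubdiff}) only yields the weakened inequality $(h^k)^\top(x^k-x)\geq f_k-f(x)-\gamma_k$ for every $x\in T$; this is precisely the place where the derivation of~(\ref{eq:distIPrxSq}) invokes the subgradient definition~(\ref{eq:sgdef}). Consequently, every instance of~(\ref{eq:distIPrxSq}), and in particular~(\ref{eq:basic_isa_divergent_series}) with $x=x^*\in\X^*$, acquires one extra summand $2\,\alpha_k\gamma_k$ on the right-hand side. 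Writing $\tilde\beta_k\define\beta_k+2\,\alpha_k\gamma_k$, the telescoped bound becomes $\norm{x^{m+1}-x^*}^2+2\sum_{k=0}^m(f_k-f^*)\,\alpha_k\leq D^2+\sum_{k=0}^m\tilde\beta_k$, which is formally identical to what is used throughout the proof of Theorem~\ref{thm:ISA_conv_apriori}, with $\tilde\beta_k$ in place of $\beta_k$.

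First I would verify $\sum_k\tilde\beta_k<\infty$. Summability of $(\beta_k)$ was already established on the way to~(\ref{eq:5}), so it remains to bound $\sum_k\alpha_k\gamma_k$. Under assumption~(i), $\alpha_k\gamma_k\leq\mu\,\alpha_k^2$, summable by~(\ref{eq:alpha}); under assumption~(ii), $\alpha_k\gamma_k\leq\mu\,\alpha_k\varepsilon_k$, and since $\varepsilon_k\leq\sum_{j=k}^\infty\varepsilon_j\leq\alpha_k$ by~(\ref{eq:alphageqepsrest}) this is $\leq\mu\,\alpha_k^2$, again summable. (In fact, $\sum\alpha_k^2<\infty$ forces $\alpha_k\to0$, so boundedness of $(\alpha_k)$ together with $\sum\gamma_k<\infty$ already gives $\sum\alpha_k\gamma_k<\infty$ directly.) Either way, the constant $R$ of~(\ref{eq:5}) is merely replaced by a slightly larger finite value $\tilde R$.

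Then I would walk through the two cases of the proof of Theorem~\ref{thm:ISA_conv_apriori}. The entire argument --- the reduction to $f_k\geq f^*$ for all $k$, the estimate $0\leq f^*_m-f^*\leq \tilde R/\big(2\sum_{k\le m}\alpha_k\big)\to0$, the exclusion of an accumulation point $f^*+\eta$ via the index subsequences $(m_\ell),(n_\ell)$, and the final Bolzano--Weierstra{\ss} plus Cauchy argument for $(x^k)\to\overline{x}\in\X^*$ --- uses only $\sum_k\tilde\beta_k<\infty$, $\sum\alpha_k=\infty$, $\varepsilon_k\to0$, the bound $\norm{h^k}\leq H$, and closedness of $\X^*$, all still in force. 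Two spots need the obvious extra $\gamma$-term inserted. First, the estimate $f^*-f_k\leq H\varepsilon_{k-1}$ for infeasible iterates with $f_k<f^*$ now reads $f^*-f_k\leq H\varepsilon_{k-1}+\gamma_{k-1}$ (the $\epsilon$-subgradient inequality replaces the subgradient one), so the right-hand side of~(\ref{eq:est_f_splitted_var}) gains a term $2\sum_{k\in M^-_m}\alpha_k\gamma_{k-1}$, finite exactly as above. Second, the telescoping bound on the gap $f_{m_\ell}-f_{n_\ell}$ gains a term $\gamma_{m_\ell}$, hence a total $\sum_\ell\gamma_{m_\ell}\leq\sum_k\gamma_k<\infty$, which does not affect the conclusion that the associated partial sums of the $\alpha_j$ diverge; the contradictions therefore go through unchanged, and so does the convergence of $(x^k)$.

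The main obstacle is not conceptual but one of bookkeeping: one must re-audit the rather long case analysis of Theorem~\ref{thm:ISA_conv_apriori} and confirm that every $\gamma_k$-contribution that surfaces --- in the master inequality through $\tilde\beta_k$, in the $f^*-f_k$ estimates for infeasible iterates, and in the telescoping bounds on function-value gaps --- is absorbed either by summability of $(\gamma_k)$ or by the weighted summability guaranteed by~(i) or~(ii). The most delicate accounting is the infeasible-iterate regime $f_k<f^*$, where $\gamma$ enters twice (once in the master inequality, once in bounding $f^*-f_k$), but in both instances the required bound reduces to a $\sum\alpha_k\gamma_k$-type or $\sum\gamma_k$-type series already handled.
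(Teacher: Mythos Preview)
Your proposal is correct and follows essentially the same approach as the paper's proof: identify the single extra term $2\alpha_k\gamma_k$ entering~(\ref{eq:distIPrxSq}) via the $\epsilon$-subgradient inequality, bound $\sum_k\alpha_k\gamma_k$ under each of the hypotheses~(i) and~(ii) (the paper uses exactly your bounds $\gamma_k\alpha_k\leq\mu\alpha_k^2$ and $\alpha_k\varepsilon_k\leq\alpha_k\sum_{j\geq k}\varepsilon_j\leq\alpha_k^2$), and then re-run the case analysis of Theorem~\ref{thm:ISA_conv_apriori} with the enlarged constant~$R$, picking up a summable $\gamma_{m_\ell}$-contribution in the telescoping estimate~(\ref{eq:est_fdiff_eta_i}). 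Your audit is in fact slightly more explicit than the paper's sketch, which does not separately flag the $f^*-f_k$ bound in case~ii); that extra care is harmless and the conclusion is the same.
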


\begin{proof} 
  The proof is analogous to that of
  Theorem~\ref{thm:ISA_conv_apriori}; we will therefore only sketch
  the necessary modifications: Choosing
  $h^k\in\partial_{\rho_k}f(x^k)$ (instead of $h^k\in\partial f(x^k)$)
  adds the term $+2\alpha_k\rho_k$ to the right hand side
  of~\eqref{eq:distIPrxSq}.  If $\rho_k\leq \mu\,\alpha_k$ for some
  constant $\mu>0$, the square-summability of $(\alpha_k)$ suffices:
  By upper bounding $2\alpha_k\rho_k$, the constant term $+2\mu A$ is
  added to the definition of $R$ in~\eqref{eq:5}.  Similarly,
  $\rho_k\leq \mu\,\varepsilon_k$ does not impair convergence under
  the assumptions of Theorem \ref{thm:ISA_conv_apriori}, because then
  the additional summand in~\eqref{eq:5} is
  \[
  2\sum_{k=0}^m \alpha_k\rho_k\,\leq\, 2\mu\sum_{k=0}^m
  \alpha_k\varepsilon_k\,\leq\,
  2\mu\sum_{k=0}^m\Big(\alpha_k\sum_{\ell=k}^\infty\varepsilon_k\Big)\,\leq\,
  2\mu\sum_{k=0}^m\alpha_k^2\,\leq\, 2\mu A.
  \]
  The rest of the proof is almost identical, using $R$ modified as
  explained above and some other minor changes where $\rho_k$-terms
  need to be considered, e.g., the term $+\rho_{m_{\ell}}$ is
  introduced in \eqref{eq:est_fdiff_eta_i}, yielding an additional sum
  in \eqref{eq:est_fdiff_eta_i_sum}, which remains finite when passing
  to the limit because $(\rho_k)$ is summable.\qed
\end{proof}

Similar extensions are possible when using dynamic step sizes of the
form~\eqref{eq:alphak}. The upper bounds~\eqref{eq:epsISAover}
and~\eqref{eq:epsISAunder} for the projection accuracies
$(\varepsilon_k)$ will depend on~$(\rho_k)$ as well, which of course
must be taken into account when extending the proofs
accordingly. Then, summability of $(\rho_k)$ (implying $\rho_k\to 0$)
is enough to guarantee convergence.  In particular, one can again
choose $\rho_k\leq\mu\,\varepsilon_k$ for some $\mu>0$. We will not go
into detail here, since the extensions are straightforward.

\subsection{Variable target values}\label{subsect:VTVM}

From a practical viewpoint, it may be desirable to have an algorithm,
using dynamic step sizes, that does not require the user to
\emph{know} a priori whether an estimate $\varphi$ is larger or
smaller than $f^*$, respectively. Moreover, relying on a constant
estimate may lead to overly small or large steps, which slows down the
convergence process (and, w.r.t. ISA (Algorithm~\ref{alg:DynamicISA}),
can also lead to many projection accuracy refinement phases). Thus, a
typical approach is to replace the constant estimate $\varphi$ by
\emph{variable target values} $\varphi_k$. These target values are
then updated in the course of the algorithm to increasingly better
estimates of $f^*$, so that the dynamic step size~\eqref{eq:alphak}
more and more resembles the ``ideal'' Polyak step size (which would
use $\varphi=f^*$). 
In principle, such extensions are also possible for the ISA framework.
We briefly describe the most important aspects in the following.

First, note that Theorems~\ref{thm:ISA_conv_over} and
\ref{thm:ISA_conv_under} provide bounds on the projection accuracies
$(\varepsilon_k)$ needed for convergence; clearly, if it is unknown
whether $\varphi_k\geq f^*$ or $\varphi_k<f^*$, one must therefore
choose
$0\leq\varepsilon_k\leq\min\{\bar{\varepsilon}_k,\abs{\tilde{\varepsilon}_k},\nu_k\}$,
with $\bar{\varepsilon}_k$ and $\tilde{\varepsilon}_k$ given
by~\eqref{eq:epsISAover} and \eqref{eq:epsISAunder}, respectively.

Crucial for any variable target value method is the ability to somehow
recognize whether $\varphi_k\geq f^*$ or $\varphi_k<f^*$. If all
iterates are feasible, this amounts to recognizing whether $X\cap\{x
\suchthat f(x)\leq\varphi_k\}\neq\emptyset$ (or, as $x\in X$, simply
$f(x)\leq\varphi_k$), implying $\varphi_k\geq f^*$, or $X\cap\{x
\suchthat f(x)\leq\varphi_k\}=\emptyset$, to infer that
$\varphi_k<f^*$, see, e.g., \cite{CL02}. However, in the case of
(possibly) infeasible iterates, $f_k\leq\varphi_k$ does not
necessarily imply that $\varphi_k$ is too large. On the other hand,
viewing the ISA iterates $x^k$ as points of the ``blown-up'' feasible
set $\mathcal{B}^{\varepsilon_{k-1}}_{X}\define\{x \suchthat
x=y+z, y\in~X, \norm{z}\leq\varepsilon_{k-1}\}$, then
$\mathcal{B}^{\varepsilon_k}_{X}\cap\{x \suchthat
f(x)\leq\varphi_k\}=\emptyset$ also implies that $\varphi_k<f^*$,
since $X\subseteq\mathcal{B}^{\varepsilon_k}_{X}$.

In view of Theorem~\ref{thm:ISA_conv_under}, keeping $\varphi_k$
constant once we recognized that $\varphi_k<f^*$ ensures convergence
of $(f_k)$ to $f^*$ (in practice, it may nevertheless be desirable to
further improve the estimate $\varphi_k$ in order to avoid overly
large steps in the vicinity of the optimum). The associated case
$\mathcal{B}^{\varepsilon_k}_{X}\cap\{x \suchthat
f(x)\leq\varphi_k\}=\emptyset$ can be detected in practice,
see~\cite[Section~III.C]{CL02} for details in the case of a feasible
method; these results are extensible to the ISA framework with
appropriate modifications.

The other case, $\varphi_k\geq f^*$, could be detected, e.g., with the help of 
an estimate of the Lipschitz constant of $f$ (recall that every convex function is locally Lipschitz and useful
estimates should usually be available) and the distances to $X$ implied by
the projection accuracies.

In the literature, various schemes have been considered as update
rules for variable targets $(\varphi_k)$, see, e.g.,
\cite{BS81,KAC91,GK99,SCT00,NB01,CL02,K04,LS05}. In principle, 
many such rules could be straightforwardly used in, or adapted to, a 
variable target value ISA. 

\subsection{Computable bounds on $\d_{\X^*}(x^k)$}

The results in Theorems~\ref{thm:ISA_conv_over}
and~\ref{thm:ISA_conv_under} hinge on bounds
$\overline{\varepsilon}_k$ and $\tilde{\varepsilon}_k$ on the
projection accuracy parameters $\varepsilon_k$, respectively. These
bounds depend on unknown information and therefore seem of little
practical use such as, for instance, an automated accuracy control in
an implementation of the dynamic step size ISA. While the quantity
$f^*$ can sometimes be replaced by estimates directly, it will
generally be hard to obtain useful estimates for the distance of the
current iterate to the optimal set. However, such estimates are
available for certain classes of objective functions. We will 
sketch several examples in the following.

For instance, when $f$ is a \emph{strongly convex function}, i.e.,
there exists some constant $C > 0$ such that for all $x,y$ and $\mu \in
[0,1]$
\[
f(\mu x+(1-\mu)y)~\leq~\mu f(x)+(1-\mu)f(y) - C\,\mu(1-\mu) \norm{x-y}^2,
\]
one can use the following upper bound on the distance to the optimal
set~\cite{KAC91}:
\[
\d_{X^*}(x)~\leq~\min\,\Big\{\,\sqrt{\tfrac{f(x)-f^*}{C}},\,\tfrac{1}{2C}\,\min_{h\in\partial f(x)}\,\norm{h}\,\Big\}.
\]

For functions $f$ such that $f(x)\geq C\, \norm{x} - D$, with
constants $C,D>0$, one can make use of $\d_{X^*}(x)\leq \norm{x} +
\tfrac{1}{C}(f^*+D)$, obtained by simply employing the triangle
inequality. Another related example class is induced by coercive
self-adjoint operators $F$, i.e., $f(x) \define \langle Fx,x\rangle
\geq C \norm{x}^2$ with some constant $C > 0$ and a scalar product
$\langle\cdot,\cdot\rangle$.  The (usually) unknown $f^*$ appearing
above may again be treated using estimates.

Yet another important class is comprised of functions which have a set
of weak sharp minima \cite{F88} over $X$, i.e., there exists a
constant $\mu>0$ such that
\begin{equation}\label{eq:weaksharp}
  f(x) - f^*~\geq~\mu\,\d_{\X^*}(x)\qquad\forall\,x\in \X.
\end{equation}
Using $\d_{\X^*}(x)\leq\d_{\X}(x)+\d_{\X^*}(\Pr_{\X}(x))$ for
$x\in\R^n$, we can then estimate the distance of $x$ to $X^*$ via the
weak sharp minima property of $f$. An important subclass of such
functions is composed of the polyhedral functions, i.e., $f$ has the
form $f(x)=\max\{\,a_i^\top x+b_i \suchthat 1\leq i\leq N\,\}$, where
$a_i\neq 0$ for all $i$; the scalar~$\mu$ is then given by
$\mu=\min\{\,\norm{a_i}\,\vert\,1\leq i\leq N\,\}$. Rephrasing
(\ref{eq:weaksharp}) as
\[
\d_{\X^*}(x)~\leq~\frac{f(x)-f^*}{\mu}\qquad\forall x\in\X,
\]
we see that for $\varphi\leq f^*$ (e.g., dual lower bounds $\varphi$),
\[
\d_{\X^*}(x)~\leq~\frac{f(x)-\varphi}{\mu}\qquad\forall x\in\X.
\]
Thus, when the bounds on the distance to the optimal set derived from
using the above inequalities become too conservative (i.e., too large,
resulting in very small $\tilde{\varepsilon}_k$-bounds), one could try
to improve the above bounds by improving the lower bound $\varphi$.

In practice on might have access to (problem-specific) estimates of
$\d_{\X^*}(x)$; in~\cite{CL02}, it is claimed that ``for most
problems'' prior experience or heuristical considerations can be used
to that end. For instance, if $X$ is compact, the diameter of $X$
leads to the (conservative) estimate
$\d_{X^*}(x)\leq\text{diam}(X)+\d_{X}(x)$.

\section{Examples}
\label{sec:examples}

In this section, we briefly discuss two examples in which we can design
adaptive approximate projections as considered in the ISA framework.
In the first example, we focus on the theoretical aspects of how our
notion of adaptive approximate projection could be used to handle a
certain class of constraints appearing in stochastic programs. The
second application considers a (deterministic) optimization problem
for which we specialize ISA and present some numerical
experiments.

\subsection{Convex expected value constraints}\label{sec:ChanceConstraints}

We consider \emph{expected value constraints}~\cite{P73,K09} of the following form
\begin{equation}\label{eq:chanceConstr}
  g(x)\define\E[f(x;\omega)]=\int_{\Omega} f(x;\omega)\,p(\omega)\,d\omega \leq\eta,
\end{equation}
where $\E$ denotes the expected value, $\omega\in\Omega\subseteq\R^q$
is a vector of random variables with density $p$, $x$ are
deterministic variables in $\R^n$, $f: \R^n \times \R^q
\rightarrow \R$, and $\eta \in \R$. If~$f$ is convex in $x$ for every $\omega\in\Omega$,
\eqref{eq:chanceConstr} is a convex constraint.
Expected value constraint appear in stochastic programming as, for instance, the
expectational form of chance constraints, see, e.g., \cite{CC59,BL99}, or when
modeling expected loss or Value-at-Risk via integrated chance
constraints, see, e.g., \cite{KH86,KM05,KHvdV06}.

While generally $g(x)$ cannot be easily computed exactly, it can be
approximated using Monte Carlo methods, if samples of $\omega$ can be
(cheaply) generated. Here, taking $M$ independent samples $\omega^1, \dots,
\omega^M$, yields the approximation
\begin{equation}\label{eq:g_approx}
  \hat{g}_M(x)\define\frac{1}{M}\sum_{i=1}^{M}f(x;\omega^i)
\end{equation}
of $g(x)$. Moreover, we assume that we can compute a subgradient
$G(x;\omega) \in \partial_x f(x;\omega)$ for each value of~$x$ and~$\omega$.
Thus, we have $h\define\E[G(x;\omega)] \in \partial g(x)$. We then use
the approximation
\begin{equation}\label{eq:h_approx}
  \hat{h}_M(x) \define \frac{1}{M}\sum_{i=1}^{M} G(x; \omega^i),
\end{equation}
which is a ``noisy unbiased subgradient'' of $g$ at $x$;
see~\cite{BM07} for details. 

Considering the Lagrangean $L(y,\lambda) = \tfrac{1}{2} \norm{x-y}^2 +
\lambda\, (g(y) - \eta)$ of the projection problem for some point $x$ and
the set of feasible points w.r.t.~\eqref{eq:chanceConstr}, the
optimality conditions for the projection obtained by differentiating $L$ are
\begin{align}
\label{eq:CClagrOC1} -x+y+\lambda\, h& =0,\quad \text{for some }h\in\partial g(y),\\
\label{eq:CClagrOC2} g(y)-\eta&=0.
\end{align}
Then, the idea is to replace $g(y)$ and $h$ by the estimates
$\hat{g}_M(y)$ and $\hat{h}_M(y)$, respectively. An
adaptive approximate projection is obtained by solving
\begin{equation}
\label{eq:CClagrApprox} y = x-\lambda\, \hat{h}_M(y), \qquad \hat{g}_M(y) = \eta.
\end{equation}
For an appropriate sampling process, we can adaptively keep control on
the resulting projection error (with high probability).

We now demonstrate this approach on a simple example constraint in
which the above system can be solved easily and we obtain explicit
projection error bounds: Consider a linear function with random
coefficients, i.e., $f(x;\omega) = \omega^\top x$ and $q = n$. This
particular type of constraint is closely related to integrated chance
constraints which are used, for instance, to model bounds on expected
losses of some sort; see, e.g., \cite{KH86,KM05}. For this choice of
$f$, our Monte Carlo estimates are
\begin{equation}\label{eq:MClinear}
  \hat{h}_M(x) = \hat{h}_M = \frac{1}{M}\sum_{i=1}^{M} \omega^i\qquad\text{and}\qquad \hat{g}_M(x)=\hat{h}_M^\top x.
\end{equation}
Note that if $\E[\hat{h}_M(x)]$ is unknown, the feasibility operator
construction in~\cite{NDP09} is not applicable. Moreover, assuming
$h$, $\hat{h}_M\neq 0$ corresponds to imposing a lower bound on the
subgradient norm, like in the convergence theorems for ISA. Observing
that $\hat{h}_M$ is independent of $x$ (so in particular,
$\hat{h}_M(y)=\hat{h}_M$ as well), we can solve
\eqref{eq:CClagrApprox} to obtain the
solution
\begin{equation}\label{eq:CClinearIPr}
  \Pr^M(x) \define x - \left(\frac{\hat{h}_M^\top x - \eta}{\norm{\hat{h}_M}^2}\right)\hat{h}_M
\end{equation}
to the approximated projection problem. The exact projection is given
by
\begin{equation}\label{eq:CClinearExPr}
  \Pr^\infty(x) \define x - \frac{h^\top x-\eta}{\norm{h}^2}h,
\end{equation}
and---as the notation suggests---we have $\Pr^\infty(x) =
\lim_{M\to\infty} \Pr^M(x)$ almost-surely, since
$\text{Prob}(\lim_{M\to\infty}\hat{h}_M=h)=1$ by the (strong) law of
large
numbers. 

For sufficiently large $M$, we can use explicit $(1-\alpha)$-confidence
intervals for the expected value $h=\E[\hat{h}_M]$ via the central
limit theorem, and eventually obtain
\begin{equation}\label{eq:CCIPrOp}
\text{Prob}\big(~\norm{\Pr^M(x)-\Pr^\infty(x)}\leq \varepsilon_M~\big) = 1-\alpha,
\end{equation}
where
\[
\varepsilon_M\define \left\lVert \frac{\hat{h}_M^\top x-\eta}{\norm{\hat{h}_M}^2}\hat{h}_M - \frac{\hat{h}_M^\top x-\eta+\overline{c}\cdot q_M^\top x}{\norm{\hat{h}_M+\overline{c}\cdot q_M}^2}(\hat{h}_M+\overline{c}\cdot q_M)\right\rVert,
\]
with $\overline{c}=-\text{sign}(\hat{h}_M^\top q_M)$ and
\[
q_M=\frac{q_{(1-\alpha/2)}}{\sqrt{M}\sqrt{M-1}}\left(\sqrt{\sum_{i=1}^{M}((\omega^i)_1-(\hat{h}_M)_1)^2},\dots,\sqrt{\sum_{i=1}^{M}((\omega^i)_n-(\hat{h}_M)_n)^2}\right)^\top,
\]
where $q_{(1-\alpha/2)}$ denotes the $(1-\tfrac{\alpha}{2})$-quantile
of the standard normal distribution. Thus, for any given
$\alpha\in(0,1)$ and for sufficiently large $M$, $\Pr^M$ defines an
adaptive approximate projection operator as specified in the ISA
framework, with probability $1-\alpha$.

It is noteworthy that the projection accuracy directly depends on $M$,
and in the linear example above we could iteratively refine the
estimate $\hat{h}_M$ easily by incorporating newly drawn independent
samples.


\subsection{Compressed sensing}\label{sec:CompressedSensing}

Compressed Sensing (CS) is a recent and very active research field
dealing, loosely speaking, with the recovery of signals from
incomplete measurements. We refer the interested reader
to~\cite{D06,BDE09,CSweb} for more information, surveys, and key
literature. A core problem of CS is finding the sparsest solution to
an underdetermined linear system, i.e.,
\begin{equation}\label{eq:P0}
  \min\,\norm{x}_0\quad\text{s.\,t.}\quad Ax=b,\qquad\quad(A\in\R^{m\times n},~\text{rank}(A)=m,~m<n),
\end{equation}
where $\norm{x}_0$ denotes the $\ell_0$ quasi-norm or support size of
the vector $x$, i.e., the number of its nonzero entries. This problem
is known to be $\mathcal{NP}$-hard. Hence, a common approach is
considering the convex relaxation known as $\ell_1$-minimization or
Basis Pursuit \cite{CDS98}:
\begin{equation}\label{eq:P1}
  \min\,\norm{x}_1\quad\text{s.\,t.}\quad Ax=b.
\end{equation}
It was shown that under certain conditions, the solutions
of~\eqref{eq:P1} and~\eqref{eq:P0} coincide, see, e.g.,
\cite{CRT06,D06}. This motivated a large amount of research on the
efficient solution of~\eqref{eq:P1}, especially in large-scale
settings. In this section, we briefly outline a specialization of the
ISA to the $\ell_1$-minimization problem~\eqref{eq:P1} and present
some numerical experiments indicating that the algorithm is an
interesting candidate in the context of Compressed
Sensing. 

\paragraph{Subgradients.}
The subdifferential of the $\ell_1$-norm at a point $x$ is given by
\begin{equation}\label{eq:L1subdiff}
  \partial\norm{x}_1 = \Big\{~h\in
  [-\mathds{1},\mathds{1}]^n~\Big\vert~h_i=\frac{x_i}{\vert x_i\vert},\quad
  \forall\, i\in\{1,\dots,n\}\text{ with } x_i \neq 0~\Big\}.
\end{equation}
We may therefore simply use the signs of the iterates as subgradients,
i.e.,
\begin{equation}\label{eq:L1subgrad}
  \partial\norm{x^k}_1 \ni h^k~\define ~\text{sign}(x^k)~=~
  \begin{cases} 
    \hfill 1, & (x^k)_i>0,\\ 
    \hfill 0, &(x^k)_i=0,\\ 
    ~-1, & (x^k)_i<0.
  \end{cases}
\end{equation}
As long as $b \neq 0$, the upper and lower bounds on the norms of the
subgradients satisfy $\underline{H} \geq 1$ and $\overline{H} \leq n$.

\paragraph{Adaptive approximate projection.}

For linear equality constraints as in~\eqref{eq:P1}, the Euclidean
projection of a point $z\in\R^n$ onto the affine feasible set
$\X\define \{\,x \suchthat Ax=b\,\}$ can be explicitly calculated as
\begin{equation}\label{eq:L1proj}
  \Pr_{\X}(z)~=~\big(I-A^\top (AA^\top)^{-1}A\big)z+A^\top (AA^\top)^{-1}\,b,
\end{equation}
where $I$ denotes the ($n\times n$) identity matrix. However, for
numerical stability, we wish to avoid the explicit calculation of the
projection matrix because it involves determining the inverse of the matrix
product $AA^\top$. Instead of applying (\ref{eq:L1proj}) in each
iteration, we can use the following adaptive procedure:
\begin{align}
  \label{eq:L1projIter1}&\qquad z^k\define x^k-\alpha_k h^k\qquad\text{(unprojected next iterate)},\\
  \label{eq:L1projIter2}&\qquad\text{find an approximate solution }q^k\text{ of }AA^\top q=Az^k-b,\\
  \label{eq:L1projIter3}&\qquad x^{k+1}\define z^k-A^\top q^k.
\end{align}
Note that the matrix $AA^\top$ is symmetric and positive definite, for
$A$ with full (row-)rank $m$. Hence, the linear system in
(\ref{eq:L1projIter2}) can be solved by an iterative method, e.g., the
method of Conjugate Gradients (CG) \cite{HS52}. 
For a given $\varepsilon_k$, stopping the CG procedure in
(\ref{eq:L1projIter2}) as soon as the iteratively updated approximate
solution $q^k$ satisfies
\begin{equation}\label{eq:CS_cg_stop}
  \norm{AA^\top q^k-\big(A(x^k-\alpha_k h^k)-b\big)}_2~\leq~\sigma_{\min}(A)\,\varepsilon_k,
\end{equation}
where $\sigma_{\min}(A)>0$ is the smallest singular value of $A$,
ensures that \eqref{eq:L1projIter1}--\eqref{eq:L1projIter3} form an
adaptive approximate projection operator of the type
\eqref{eq:IPr}. 
Note that a truncated CG procedure (with any fixed number of
iterations) can also be shown to define a ``feasibility operator'' of
the type considered in~\cite{NDP09}.

Furthermore, to obtain computable upper bounds on $(\varepsilon_k)$,
we can use the results about weak sharp minima discussed in the
previous section: The \l-norm can be rewritten as a polyhedral
function. With $\varphi\leq f^*$ (which is easily available, e.g.,
$\varphi=0$), we can thus derive
\[
\d_{\X^*}(x^k)~\leq~2\frac{\norm{Ax^k-b}_2}{\sigma_{\min}(A)}+\frac{\norm{x^k}_1-\varphi}{\sqrt{n}}.
\]
In total, this yields bounds that can be easily computed from the
original data only. Theorems~\ref{thm:ISA_conv_apriori},
\ref{thm:ISA_conv_over}, or \ref{thm:ISA_conv_under} then provide
explicit convergence statements.

\subsubsection{Numerical Experiments} 
It is well-known that~\eqref{eq:P1} can be solved as a linear program
(LP), e.g., employing the standard variable split $x=x^+-x^-$:
\begin{equation}\label{eq:P1LP}
  \min\,x^{+} +x^{-}\quad\text{s.\,t.}\quad Ax^{+} -Ax^{-} =b,~x^{+}\geq 0,~x^{-}\geq 0.
\end{equation}
Another common approach to~\eqref{eq:P1} is to solve a sequence of
regularized problems of the form
\begin{equation}\label{eq:Homo}
  \min\,\tfrac{1}{2}\norm{Ax-b}_2^2+\tau\norm{x}_1,
\end{equation}
with decreasing $\tau$. As $\tau\to 0$, the solution sequence
$x(\tau)$ of~\eqref{eq:Homo} converges to a solution
of~\eqref{eq:P1}. The homotopy method (see, e.g., \cite{OPT00,MCW05})
traces this solution path for decreasing $\tau$ and has the desirable
property to require only $k$ steps to reach the optimal solution~$x^*$
to~\eqref{eq:P1}, if $x^*$ has only $k$ nonzero entries and $k$ is
sufficiently small.

We performed experiments to compare our ISA
Algorithm~\ref{alg:DynamicISA}, applied to~\eqref{eq:P1} (using
adaptive approximate or exact projections), with the commercial
LP-solver \textsc{Cplex} 12.5 (dual simplex method applied
to~\eqref{eq:P1LP}) and the homotopy implementation (version 1.0)
available at \url{http://users.ece.gatech.edu/~sasif/homotopy/}. In
our ISA implementation we employ at most $5$ CG steps to approximate
the projection; albeit differing from theory, this turned out to
suffice. Moreover, the subgradients are stabilized as in~\cite{L98},
and the parameter $\lambda_k$ is halved after $5$ consecutive
iterations without relevant improvement of the objective
($\lambda_0=0.85$); the method terminates when the step sizes become
too small or if a stagnation of the algorithmic process is detected.
By stagnation, we mean that either the objective improvement stalls
over a span of $500$ iterations, or the approximate support
$S=\{i:\abs{x^k_i}>\max\{10^{-6},s\}\}$ does not change over $10$ successive
updates, which are performed every $m/100$ iterations; here $s$ is chosen such that the
entries $x^k_j$ with $\abs{x^k_j}\geq s$ account for at least
$99.99\%$ of $\norm{x^k}_1$. Finally, as a
postprocessing step after termination, we try to improve the solution
by solving the system restricted to columns indexed by $S$, similar to
the ``debiasing'' step described in~\cite[Section II.I]{WFN09}.

Note that in contrast to \textsc{Cplex}, the homotopy method and ISA
are implemented in \textsc{Matlab} (version R2012a/7.14). Moreover, by default,
\textsc{Cplex} ensures feasibility in the sense that the computed
solution $\overline{x}$ obeys $\norm{A\overline{x}-b}_\infty\leq
10^{-6}$; from the respective convergence results, both the homotopy
method and ISA will reach this level of feasibility after finitely
many iterations. As a safeguard, we added an additional high-accuracy
projection after regular termination. However, this step was not
required for the homotopy method, and only on a single instance for
ISA (this induced additional running time and the time for the
postprocessing step is incorporated in the times reported below).

The first test uses a $1024\times 4096$ Gaussian matrix, the second
one a partial discrete cosine transform (DCT) matrix consisting of
$512$ randomly drawn rows of the $2048\times 2048$ DCT matrix; all
columns are normalized to unit Euclidean length. For both matrices, we
constructed ten vectors $x^i$ with sparsities $\norm{x^i}_0=i\cdot
m/10$, $i\in\{1,\dots,10\}$, (rounded down to the next integer
value). The nonzero entries are $\pm 1$ and each $x^i$ is the known
unique solution to the instance given by the respective matrix $A$ and
right hand side vector $b\define Ax^i$, where uniqueness was achieved
by ensuring the ``strong source condition'' (see, e.g., \cite{GHS11})
by means of the methodology proposed in~\cite{KL13}.

\begin{figure}[t]
  \centering
  \subfigure[$\ell_2$-distances to optimum for instances with $1024\times 4096$ Gaussian matrix.]{\includegraphics[width=0.475\textwidth]{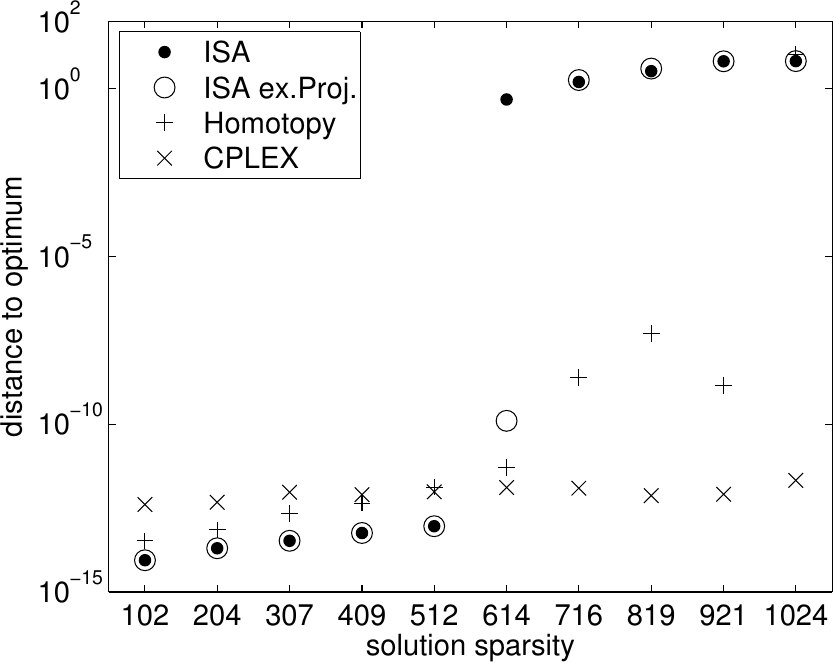}}
  \label{fig:GaussDist}
  \hfill
  \subfigure[Running times (s) for instances with $1024\times 4096$ Gaussian matrix.]{\includegraphics[width=0.463\textwidth]{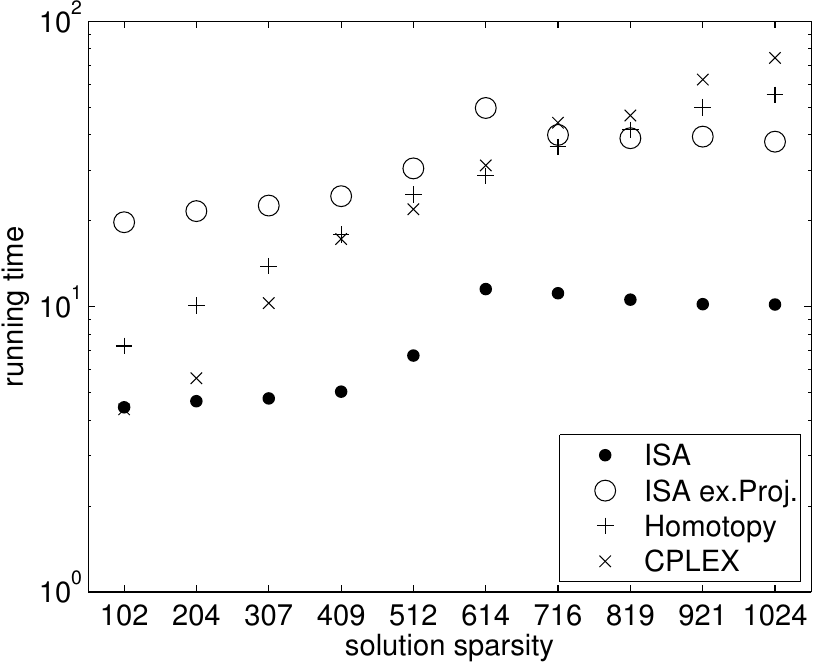}}  
  \label{fig:GaussTime}
  \subfigure[$\ell_2$-distances to optimum for instances with $512\times 2048$ partial DCT matrix.]{\includegraphics[width=0.475\textwidth]{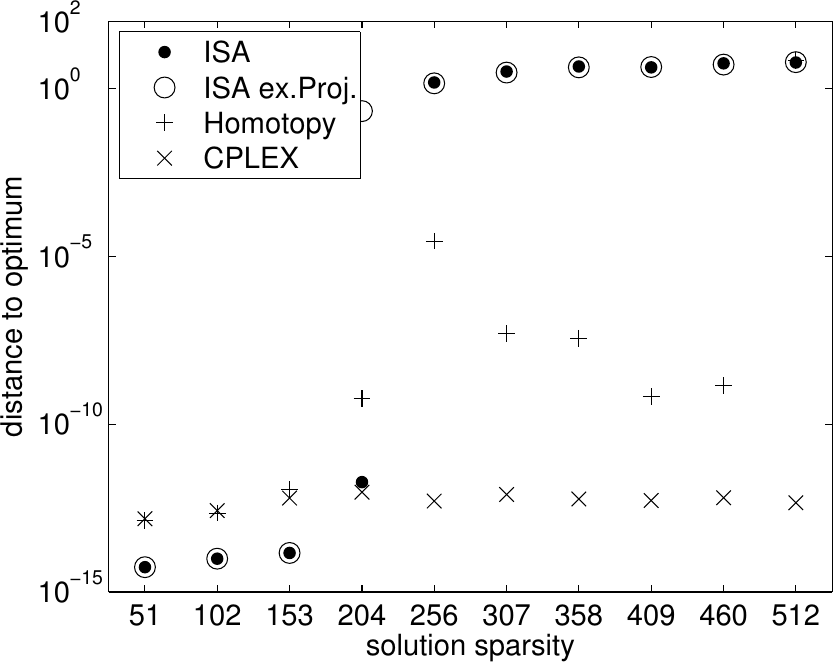}}
  \label{fig:partdctDist}
  \hfill
  \subfigure[Running times (s) for instances with $512\times 2048$ partial DCT matrix.]{\includegraphics[width=0.47\textwidth]{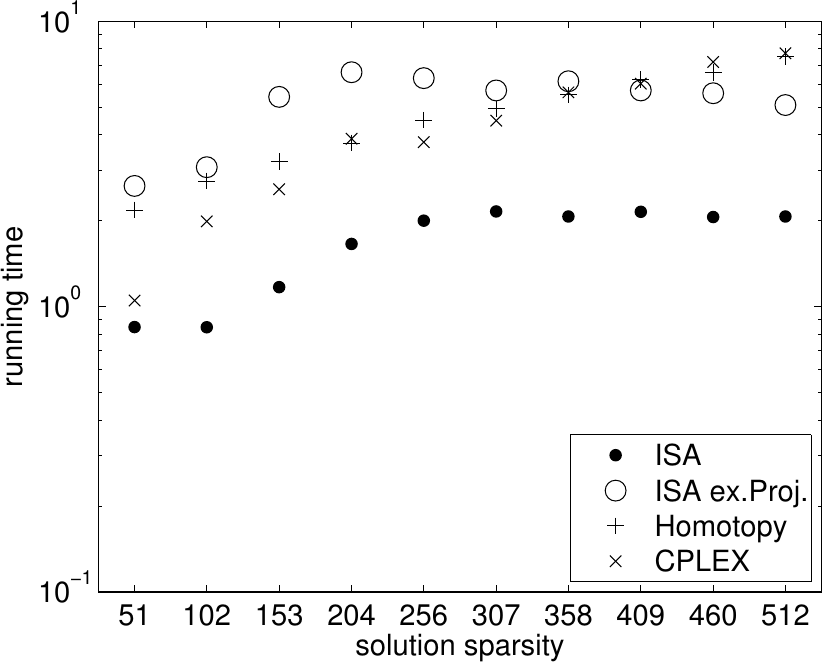}}
  \label{fig:partdctTime}
  \caption{Numerical experiments for Gaussian matrix ((a) and (b)) and partial DCT matrix ((c) and (d)), each with normalized columns, for varying solution sparsities.}
  \label{fig:experiments}
\end{figure}

Figure~\ref{fig:experiments} shows the running times (in seconds) and
the $\ell_2$-norm distances to the respective known optimal solution.
As explained above, all solutions are feasible to within an
$\ell_\infty$-tolerance of $10^{-6}$.
The experiments show that using adaptive approximate projections
instead of the exact ones in ISA saves a considerable amount of time,
as was to be expected. The achieved final accuracy is almost always
(nearly) the same. For the varying sparsity levels of the solution, we
see that all solvers struggle when the number of nonzero entries in
the optimum exceeds about $m/2$: \textsc{Cplex} and the homotopy
method still produce mostly accurate solutions but at the cost of a
significant increase in the required solution times (note the
logarithmic scales on the vertical axes), ISA on the other hand has a
somewhat more stable runtime behavior, but loses accuracy when the
solution is dense.

Since in Compressed Sensing, the solutions encountered are typically
very sparse, the interesting cases are those with sparsity (much)
smaller than $m/2$. Clearly, for such sparse optimal solutions, ISA
(with adaptive approximate projections) is superior to \textsc{Cplex}
and the homotopy implementation both in terms of accuracy and
speed. Thus, these examples show the potential of ISA as a successful
algorithm for CS sparse recovery.

\section{Concluding remarks}

Several aspects remain subject to future research. For instance, it
would be interesting to investigate whether our framework extends
to 
(infinite-dimensional) Hilbert space settings, incremental subgradient
schemes, bundle methods (see, e.g., \cite{HUL93,K90}), or Nesterov's
algorithm~\cite{N05}. It is also of interest to consider how the ISA
framework could be combined with error-admitting settings such as
those in \cite{Z10,NB10}, i.e., for random or deterministic
(non-vanishing) noise and erroneous function or subgradient
evaluations. Some of the recent results in~\cite{NB10}, which all
require feasible iterates, seem conceptually close to our
convergence analyses, so we presume a blend of the two approaches to
be rather fruitful. It would also be of interest to investigate
convergence behavior with other general notions of ``adaptive
approximate projections'', e.g., solving the projection problem with
an approximation algorithm with additive or multiplicative performance
guarantee.

From a practical viewpoint, it will be interesting to see how ISA, or
possibly a variable target value variant as described in
Section~\ref{subsect:VTVM}, compares with other solvers in terms of
solution accuracy and runtime. For the $\ell_1$-minimization problem
(\ref{eq:P1}), we have seen in Section~\ref{sec:CompressedSensing}
that ISA promises to be an interesting candidate; an extensive
computational comparison of various state-of-the-art $\ell_1$-solvers,
including (a more refined version of) our ISA implementation, can be
found in~\cite{LPT11}.  An extensive test for convex expected value
constraints, while beyond the scope of this paper, would be an
interesting further line of work.

\paragraph{Acknowledgments.} We thank the anonymous referees for their
numerous helpful comments which greatly helped improving this paper.

\bibliographystyle{siam} 
\bibliography{Paper_ISA_Convergence}

\begin{thebibliography}{10}

\bibitem{AIS98}
{\sc Y.~I. Alber, A.~N. Iusem, and M.~V. Solodov}, {\em On the projected
  subgradient method for nonsmooth convex optimization in a {H}ilbert space},
  Mathematical Programming, 81 (1998), pp.~23--35.

\bibitem{AHKS87}
{\sc E.~Allen, R.~Helgason, J.~Kennington, and B.~Shetty}, {\em A
  generalization of {P}olyak's convergence result for subgradient
  optimization}, Mathematical Programming, 37 (1987), pp.~309--317.

\bibitem{AW09}
{\sc K.~M. Anstreicher and L.~A. Wolsey}, {\em Two ``well-known'' properties of
  subgradient optimization}, Mathematical Programming, 120 (2009),
  pp.~213--220.

\bibitem{BB96}
{\sc H.~H. Bauschke and J.~M. Borwein}, {\em On projection algorithms for
  solving convex feasibility problems}, SIAM Review, 38 (1996), pp.~367--426.

\bibitem{BS81}
{\sc M.~S. Bazaraa and H.~D. Sherali}, {\em On the choice of step size in
  subgradient optimization}, European Journal of Operations Research, 7 (1981),
  pp.~380--388.

\bibitem{BM73}
{\sc D.~P. Bertsekas and S.~K. Mitter}, {\em A descent numerical method for
  optimization problems with nondifferentiable cost functionals}, SIAM Journal
  of Control, 11 (1973), pp.~637--652.

\bibitem{BL99}
{\sc J.~R. Birge and F.~Louveaux}, {\em Introduction to Stochastic
  Programming}, Springer Series in Operations Research, Springer, 1999.
\newblock corrected second printing.

\bibitem{BM07}
{\sc S.~Boyd and A.~Mutapcic}, {\em Stochastic subgradient methods}.
\newblock Lecture notes,
  \url{http://see.stanford.edu/materials/lsocoee364b/04-stoch_subgrad_notes.pd%
f}, 2007.

\bibitem{BDE09}
{\sc A.~M. Bruckstein, D.~L. Donoho, and M.~Elad}, {\em From sparse solutions
  of systems of equations to sparse modeling of signals and images}, SIAM
  Review, 51 (2009), pp.~34--81.

\bibitem{CRT06}
{\sc E.~Cand\`{e}s, J.~Romberg, and T.~Tao}, {\em Robust uncertainty
  principles: Exact signal reconstruction from highly incomplete frequency
  information}, IEEE Transactions on Information Theory, 52 (2006),
  pp.~489--509.

\bibitem{CC59}
{\sc A.~Charnes and W.~W. Cooper}, {\em Chance-constrained programming},
  Management Science, 6 (1959), pp.~73--79.

\bibitem{CDS98}
{\sc S.~S. Chen, D.~L. Donoho, and M.~A. Saunders}, {\em Atomic decomposition
  by basis pursuit}, SIAM Journal on Scientific Computing, 20 (1998),
  pp.~33--61.

\bibitem{CDD02}
{\sc A.~Cohen, W.~Dahmen, and R.~DeVore}, {\em Adaptive wavelet methods. {II}.
  {B}eyond the elliptic case}, Foundations of Computational Mathematics. The
  Journal of the Society for the Foundations of Computational Mathematics, 2
  (2002), pp.~203--245.

\bibitem{CL02}
{\sc P.~L. Combettes and J.~Luo}, {\em An adaptive level set method for
  nondifferentiable constrained image recovery}, IEEE Transactions on Image
  Processing, 11 (2002), pp.~1295--1304.

\bibitem{CSweb}
{\em Compressive sensing resources}.
\newblock \url{http://dsp.rice.edu/cs} (Accessed 03/26/2012).

\bibitem{DAF09}
{\sc G.~D'Antonio and A.~Frangioni}, {\em Convergence analysis of deflected
  conditional approximate subgradient methods}, SIAM Journal on Optimization,
  20 (2009), pp.~357--386.

\bibitem{D06}
{\sc D.~L. Donoho}, {\em Compressed sensing}, IEEE Transactions on Information
  Theory, 52 (2006), pp.~1289--1306.

\bibitem{F88}
{\sc M.~C. Ferris}, {\em Weak sharp minima and exact penalty functions}, Tech.
  Rep. 779, Comp. Sci. Dept., University of Wisconsin, Madison, WI, 1988.

\bibitem{GK99}
{\sc J.~L. Goffin and K.~Kiwiel}, {\em Convergence of a simple subgradient
  level method}, Mathematical Programming, 85 (1999), pp.~207--211.

\bibitem{GHS11}
{\sc M.~Grasmair, M.~Haltmeier, and O.~Scherzer}, {\em Necessary and sufficient
  conditions for linear convergence of $\ell^1$-regularization}, Communications
  on Pure and Applied Mathematics, 64 (2011), pp.~161--182.

\bibitem{KH86}
{\sc W.~K.~K. Haneveld}, {\em Duality in Stochastic Linear and Dynamic
  Programming}, vol.~274 of Lecture Notes in Economics and Mathematical
  Systems, Springer, 1986.

\bibitem{KHvdV06}
{\sc W.~K.~K. Haneveld and M.~H. van~der Vlerk}, {\em Integrated chance
  constraints: reduced forms and an algorithm}, Computational Management
  Science, 3 (2006), pp.~245--269.

\bibitem{NDP09}
{\sc E.~S. Helou~Neto and A.~R. De~Pierro}, {\em Incremental subgradients for
  constrainted convex optimization: A unified framework and new methods}, SIAM
  Journal on Optimization, 20 (2009), pp.~1547--1572.

\bibitem{HS52}
{\sc M.~R. Hestenes and E.~Stiefel}, {\em Methods of conjugate gradients for
  solving linear systems}, Journal of Research of the National Bureau of
  Standards, 49 (1952), pp.~409--436.

\bibitem{HUL93}
{\sc J.-B. Hiriart-Urruty and C.~Lemar{\'e}chal}, {\em Convex analysis and
  minimization algorithms. {II}}, vol.~306 of Grundlehren der Mathematischen
  Wissenschaften [Fundamental Principles of Mathematical Sciences], Springer,
  1993.

\bibitem{HUL04}
\leavevmode\vrule height 2pt depth -1.6pt width 23pt, {\em Fundamentals of
  Convex Analysis}, Springer, 2004.
\newblock Corrected Second Printing.

\bibitem{KM05}
{\sc P.~Kall and J.~Mayer}, {\em Stochastic Linear Programming. Models, Theory,
  and Computation}, Springer, 2005.

\bibitem{KAC91}
{\sc S.~Kim, H.~Ahn, and S.-C. Cho}, {\em Variable target value subgradient
  method}, Mathematical Programming, 49 (1991), pp.~359--369.

\bibitem{K90}
{\sc K.~C. Kiwiel}, {\em Proximity control in bundle methods for convex
  nondifferentiable minimization}, Mathematical Programming, 46 (1990),
  pp.~105--122.

\bibitem{K98}
\leavevmode\vrule height 2pt depth -1.6pt width 23pt, {\em Subgradient method
  with entropic projections for convex nondifferentiable minimization}, Journal
  on Optimization Theory and Applications, 96 (1998), pp.~159--173.

\bibitem{K04}
\leavevmode\vrule height 2pt depth -1.6pt width 23pt, {\em Convergence of
  approximate and incremental subgradient methods for convex optimization},
  SIAM Journal on Optimization, 14 (2004), pp.~807--840.

\bibitem{KL13}
{\sc C.~Kruschel and D.~A. Lorenz}, {\em Maximal recoverable supports for
  sparse recovery}.
\newblock Preprint, 2013.

\bibitem{K09}
{\sc D.~Kuhn}, {\em Convergent bounds for stochastic programs with expected
  value constraints}, Journal of Optimization Theory and Applications, 141
  (2009), pp.~597--618.

\bibitem{LPS96a}
{\sc T.~Larsson, M.~Patriksson, and A.-B. Str\"omberg}, {\em Conditional
  subgradient optimization -- theory and applications}, European Journal of
  Operations Research, 88 (1996), pp.~382--403.

\bibitem{LLM09}
{\sc A.~S. Lewis, D.~R. Luke, and J.~Malick}, {\em Local linear convergence for
  alternating and averaged nonconvex projections}, Foundations of Computational
  Mathematics, 9 (2009), pp.~485--513.

\bibitem{LS05}
{\sc C.~Lim and H.~D. Sherali}, {\em Convergence and computational analyses for
  some variable target value and subgradient deflection methods}, Computational
  Optimization and Applications, 34 (2005), pp.~409--428.

\bibitem{L98}
{\sc A.~L\"obel}, {\em Optimal Vehicle Scheduling in Public Transit}, PhD
  thesis, Technische Universit\"at Berlin, 1998.
\newblock Shaker.

\bibitem{LPT11}
{\sc D.~A. Lorenz, M.~E. Pfetsch, and A.~M. Tillmann}, {\em Solving {B}asis
  {P}ursuit: {S}ubgradient algorithm, heuristic optimality check, and solver
  comparison}.
\newblock Optimization Online E-Print ID 2011-07-3100, 2011.

\bibitem{MCW05}
{\sc D.~Malioutov, M.~\c{C}etin, and A.~Willsky}, {\em Homotopy continuation
  for sparse signal representation}, in Proceedings of the IEEE International
  Conference on Acoustics, Speech, and Signal Processing (ICASSP'05), vol.~5,
  2005, pp.~733--736.

\bibitem{NB01}
{\sc A.~Nedi{\' c} and D.~P. Bertsekas}, {\em Incremental subgradient methods
  for nondifferentiable optimization}, SIAM Journal on Optimization, 12 (2001),
  pp.~109--138.

\bibitem{NB10}
\leavevmode\vrule height 2pt depth -1.6pt width 23pt, {\em The effect of
  deterministic noise in subgradient methods}, Mathematical Programming, 125
  (2010), pp.~75--99.

\bibitem{N05}
{\sc Y.~Nesterov}, {\em Smooth minimization of non-smooth functions},
  Mathematical Programming, 103 (2005), pp.~127--152.

\bibitem{OPT00}
{\sc M.~Osbourne, B.~Presnell, and B.~Turlach}, {\em A new approach to variable
  selection in least squares problems}, IMA Journal of Numerical Analysis, 20
  (2000), pp.~389--402.

\bibitem{P67}
{\sc B.~T. Polyak}, {\em A general method for solving extremal problems}, Dokl.
  Akad. Nauk SSSR, 174 (1967), pp.~33--36.

\bibitem{P69}
\leavevmode\vrule height 2pt depth -1.6pt width 23pt, {\em Minimization of
  nonsmooth functionals}, USSR Computational Mathematics and Mathematical
  Physics, 9 (1969), pp.~14--29.

\bibitem{P78}
\leavevmode\vrule height 2pt depth -1.6pt width 23pt, {\em Subgradient methods:
  A survey of soviet research}, in Nonsmooth Optimization, C.~Lemar{\'e}chal
  and R.~Mifflin, eds., IIASA Proceedings Series, Pergamon Press, 1978,
  pp.~5--29.

\bibitem{P73}
{\sc A.~Pr\'{e}kopa}, {\em Contributions to the theory of stochastic
  programming}, Mathematical Programming, 4 (1973), pp.~202--221.

\bibitem{SCT00}
{\sc H.~D. Sherali, G.~Choi, and C.~H. Tubcbilek}, {\em A variable target value
  method for nondifferentiable optimization}, Operations Research Letters, 26
  (2000), pp.~1--8.

\bibitem{S85}
{\sc N.~Z. Shor}, {\em Minimization Methods for Non-Differentiable Functions},
  Springer, 1985.

\bibitem{vdBSFM08}
{\sc E.~van~den Berg, M.~Schmidt, M.~P. Friedlander, and K.~Murphy}, {\em Group
  sparsity via linear-time projection}, Tech. Rep. TR-2008-09, University of
  British Columbia, 2008.

\bibitem{WFN09}
{\sc S.~J. Wright, R.~D. Nowak, and M.~A.~T. Figueiredo}, {\em Sparse
  reconstruction by seperable approximation}, IEEE Transactions on Signal
  Processing, 57 (2009), pp.~2479--2493.

\bibitem{Z10}
{\sc A.~J. Zaslavski}, {\em The projected subgradient method for nonsmooth
  convex optimization in the presence of computational error}, Numerical
  Functional Analysis and Optimization, 31 (2010), pp.~616--633.

\end{thebibliography}

\end{document}